\documentclass[12pt,twoside]{article}
\usepackage{amssymb}
\usepackage[mathscr]{eucal}
\usepackage{eufrak}
\usepackage{amsmath,amsthm}
\usepackage{mathrsfs}
\usepackage{braket}
\usepackage[colorlinks=true,
   urlcolor=blue,           filecolor=green,      
   citecolor=green,      
   linkcolor=red,           bookmarks=true,
  unicode,
   plainpages=false,   ]{hyperref}
\usepackage{color}
\usepackage[latin1]{inputenc}
\overfullrule=0pt
\font\bg=cmbx10 scaled 1200

\def\bp{\begin{proof}}
\def\ep{\end{proof}}
\def\wt{\widetilde}
\def\O{\Omega}
\def\g{\gamma}
\def\a{\alpha}
\def\n{\nabla}
\def\intl#1{\int\limits_{#1}}
\def\intll#1#2{\int\limits_{#1}^{#2}}
\def\dm{|\hskip-0.05cm|}
\def\OO{\Omega}
\def\displ{\displaystyle}

\def\VS{\vspace{6pt}\\\displ }
\def\rf#1{{\rm(\ref{#1})}}

\def\R{\Bbb R}
\def\N{\Bbb N}
\def\o{\"{o}}
\def\à{\`{a}}

\def\dy{\displaystyle}
\def\vep{\varepsilon}

\def\pa{\partial}
\def\cD{\mathcal{D}}
\def\be{\begin{equation}}
\def\ba{\begin{array}}
\def\ea{\end{array}}
\def\ee{\end{equation}}
\def\vs1{\vspace{1ex}}

\def\ov{\overline}

\def\é{\'{e}}
\setlength{\evensidemargin}{0.1cm}
\setlength{\oddsidemargin}{0.5cm}

\pagestyle{myheadings}
\font\sc=cmcsc10
\setlength{\textwidth}{15.5cm}
 \setlength{\textheight}{21cm}
\setlength{\topmargin}{0.5cm}
\title{\bg Estimates of a possible gap related to the energy equality
for a class of non-Newtonian fluids\footnote{The results contained in this paper were presented by F.C. at the conference Fluids@PoliMi https://www.mate.polimi.it/events/Fluids-PoliMi/.
}}
\author{\sc  Francesca Crispo\thanks{Dipartimento di Matematica e Fisica, Universit\`{a} degli Studi della Campania ``L. Vanvitelli'', via Vivaldi 43, 81100 Caserta, Italy.}
 \and \sc Angelica Pia Di Feola
\thanks{ Dipartimento di Matematica e Fisica,  Universit\`{a} degli Studi della Campania ``L. Vanvitelli'', via Vivaldi 43, 81100 Caserta, Italy.
angelicapia.difeola@unicampania.it}
\and \sc Carlo Romano Grisanti \thanks{Dipartimento di Matematica, Universit\`a di Pisa, via Buonarroti 1/c, 56127 Pisa, Italy. carlo.romano.grisanti@unipi.it}}
\date{ }
\begin{document}
\maketitle 
{\bf Abstract} - {\small The paper is concerned with the 3D-initial value problem for power-law fluids with shear dependent viscosity in a spatially periodic domain. The goal is the construction of a weak solution enjoying an energy equality.  The results hold assuming an initial data      $v_0\in J^2(\OO)$ and for $p\in \left(\frac 95,2\right)$. It is interesting to observe that the result is in complete agreement with the one known for the Navier-Stokes equations. Further, in both cases,
 the additional dissipation, which measures the possible  gap with the classical energy equality, is only expressed in terms of energy quantities.
} 
\vskip 0.2cm
 \par\noindent{\small Keywords: power-law fluids,   weak solutions, energy equality. }

 \par\noindent
 \vskip -0.7true cm\noindent
\newcommand{\red}{\protect\bf}
\renewcommand\refname{\centerline
{\red {\normalsize \bf References}}}
\newtheorem{ass}
{\bf Assumption} 
\newtheorem{defi}
{\bf Definition} 
\newtheorem{tho}
{\bf Theorem} 
\newtheorem{rem}
{\sc Remark} 
\newtheorem{lemma}
{\bf Lemma} 
\newtheorem{coro}
{\bf Corollary} 
\newtheorem{prop}
{\bf Proposition} 
\renewcommand{\theequation}{\arabic{equation}}
\setcounter{section}{0}
\section{Introduction}\label{intro} This note concerns the 3D-initial  
value problem for power-law fluids in a spatially periodic domain:
\be\label{NS}\ba{l}v_t-\nabla \cdot((\mu+|\cD v|^2)^\frac{p-2}{2} \cD v)+v\cdot
\nabla v+\nabla\pi_v=0,\VS \nabla\cdot
v=0,\mbox{ in }(0,T)\times\OO,\VS v(0,x)=v_0(x),\mbox{ on
}\{0\}\times\OO,\ea\ee  where $\OO:=(0,L)^3$, $L\in (0, \infty)$, is a cube and we prescribe space-periodic boundary conditions
\be\label{NSb}v|_{\Gamma_j}=v|_{\Gamma_{j+3}},\ \nabla v|_{\Gamma_j}=\nabla v|_{\Gamma_{j+3}},\ \ {\pi_{v}}|_{\Gamma_j}={\pi_v}|_{\Gamma_{j+3}},\ee
with $\Gamma_j:=\partial \OO\cap \{x_j=0\}$, $\Gamma_{j+3}:=\partial \OO\cap \{x_j=L\}$, $j=1,2,3$.
In system \rf{NS} the symbol   $v$ denotes the kinetic field, $\pi_v$ is the pressure field,
 $v_t:=
\frac\partial{\partial t}v$, $\cD v:=\frac 12(\nabla v+\nabla v^T)$ the symmetric part of the gradient of $v$,  
 $v\cdot\nabla v:=
v_k\frac\partial{\partial x_k}v$, and $\mu$ is a nonnegative constant. For references, related both to the physical model and to the mathematical
theory of non-Newtonian fluids, we mainly refer to \cite{15, MNRR, Raj, show1}. \par
In this setting, we aim to construct a weak solution to the power-law system possessing the energy equality property.\\
Indeed, in the two-dimensional case for $p>1$, and in the 3-dimensional case for $p\geq \frac{11}{5}$ there exist global strong solutions for which the energy equality there holds. In the $3$D case, global weak solutions exist for $p>\frac{8}{5}$ (see \cite{Frehse}), for such solutions, as far as we are aware, only the energy inequality has been established. Hence, the aim of this investigation is to extend the range of $p$ for which there exist suitable solutions satisfying the energy equality property. Moreover, the present paper is part of a broader research that began with the study of energy equality in the context of Navier-Stokes equations undertaken by Crispo, Grisanti and Maremonti in \cite{CGM1, CGM3, CGM2}.
\par Our results concern the shear thinning case, therefore throughout the paper we always have $p<2$. Further, our main result holds for $p\in \left(\frac 95, 2\right)$. \par 
 In order to better state our result, we recall the following definitions. 
 We set $$\mathcal V:=\Set{\phi\in C^\infty_{per}(\OO), \nabla\cdot \phi=0, \int_{\OO}\phi(x)dx=0},$$
 $$J^{2}_{per}(\OO):=\mbox{completion of }\mathcal V\mbox{ in }  L^{2}(\OO),\ \ J^{1,q}_{per}(\OO):=\mbox{completion of }\mathcal V\mbox{ in }  W^{1,q}(\OO).$$ 
 \begin{defi}\label{WS}{\sl  Let $v_0\in J^2_{per}(\OO)$. A field $v:(0,\infty)\times\OO\to\R^3$ is said to be a weak solution to the problem \rf{NS}-\rf{NSb} corresponding to an initial datum $v_0$ if \begin{itemize}\item[{\rm1)}]for all $T>0$, $v\in L^\infty(0,T;J^2_{per}(\OO))\cap L^p(0,T;J^{1,p}_{per}(\OO))$,\item[{\rm 2)}]for all $T>0$, the field $v$
satisfies the  equation:
$$\displ\intll
0T\Big[(v,\varphi_\tau)-((\mu+|\cD v|^2)^\frac{p-2}{2}\cD v,\cD \varphi)+(v\cdot\nabla\varphi,v)\Big]d\tau=-(v_0,\varphi(0)),$$ for all $\varphi(t,x)\in C_0^\infty([0,T); \mathcal V)$\,,\item[{\rm 3)}]$\displ\lim_{t\to0}\,(v(t),\varphi)=(v_0,\varphi)\,,\mbox{ for all }\varphi\in \mathcal V\,.$\end{itemize}}
\end{defi} 
Using the Galerkin approximating sequence, we construct a weak solution. The main novelty of our result lies in the strong convergence (up to a suitable subsequence) of the approximating sequence in $L^q(0,T;J^{1,p}_{per}(\O))$, for all $q\in [1,p)$ and $T>0$, as well as in the almost everywhere in $(0,T)$ convergence of the $L^2$-norm of gradients\footnote{This strategy is successfully employed in \cite{CGM1, CGM3, CGM2} for the first time.}. Since strong convergence does not hold in $L^p(0,T;J^{1,p}_{per}(\O))$, where only the weak convergence is guaranteed, by lower semicontinuity of the norm, it leads to the energy inequality for our solution.
Motivated by this observation, the authors attempt to establish the energy equality for the solution using the energy equality satisfied by the approximating solutions and introducing some auxiliary functions. The outcome is a \emph{a sort of} energy equality, i.e., an energy equality involving additional quantities.
To the best of our knowledge, both this type of estimate and the strong convergence of gradients in such spaces are new in the literature.
\begin{tho}\label{mainT}
{\sl 
Let $p \in \left(\tfrac{9}{5},2\right)$, $\mu > 0$, and $v_0 \in J^2_{per}(\Omega)$.  
Let $\{v^N\}_{N \in \N}$ be the sequence in Proposition~\ref{existence}, which converges, in a suitable topology, to a weak solution $v$ of \eqref{NS}-\eqref{NSb}. Then, the set
$${\mathcal T}:=\left\{\tau\in(0,T): \|\nabla v^N(\tau)\|_p\rightarrow\|\nabla v(\tau)\|_p,\|\nabla v^N(\tau)\|_2\rightarrow \| \nabla v (\tau) \|_2\right\}$$
has full measure in $(0,T)$ and, for all $s,t \in \mathcal{T}$, with $s<t$, the solution $v$ satisfies
$$ \|v(t)\|_2^2 + 2 \int_s^t \|(\mu+|\mathcal{D} v|^2)^{\tfrac{p-2}{4}} \mathcal{D} v\|_2^2 \, d\tau + M(s,t) = \|v(s)\|_2^2, $$
with
\begin{align*}
M(s,t) :=\; &\lim_{\alpha \to \frac{\pi}{2}^-}\limsup_{N\to\infty}
\,2\int_{J_N(\alpha)} \int_\Omega (\mu+|\mathcal{D} v^N|^2)^{\tfrac{p-2}{2}} |\mathcal{D} v^N|^2 \, dx\, d\tau \\
=\; & -\lim_{\alpha \to \frac{\pi}{2}^-}\limsup_{N\to\infty}
\sum_h \Big(\|v^N(t_h(N,\alpha))\|_2^2 - \|v^N(s_h(N,\alpha))\|_2^2\Big),
\end{align*}
where, for any $\alpha \in [0,\frac{\pi}{2})$, the set $J_N(\alpha) \subset (s,t)$ has the following properties:
\begin{itemize}
\item $\|\nabla v^N(\tau)\|_2^{2\gamma} > \tan \alpha$, for any $\tau \in J_N(\alpha)$, where $\gamma = \zeta-1$ and $\zeta$ is the exponent in Lemma~\ref{zetadef};
\item $\dy \lim_{\alpha \to \frac{\pi}{2}^-} |J_N(\alpha)| = 0$, uniformly in $N$;
\item $J_N(\alpha) = \dy \bigcup_h (s_h(N,\alpha), t_h(N,\alpha))$,  
where the indices $h$ are at most countable and the intervals are mutually disjoint.
\end{itemize}}
\end{tho}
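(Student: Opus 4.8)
The plan is to transfer the \emph{exact} energy equality satisfied by the Galerkin approximations to the limit $v$ through a time-localization that isolates the instants at which the $L^2$-norm of the gradient is large. First I would recall that each $v^N$ belongs to a finite-dimensional divergence-free space and is therefore an admissible test function in its own Galerkin equation; since $(v^N\cdot\n v^N,v^N)=0$ and the pressure term drops out, one obtains the pointwise-in-time identity $\tfrac12\tfrac{d}{d\tau}\|v^N(\tau)\|_2^2=-D^N(\tau)$, where $D^N(\tau):=\|(\mu+|\cD v^N|^2)^{(p-2)/4}\cD v^N(\tau)\|_2^2$, and hence, for every $[a,b]\subseteq(s,t)$, the balance $\|v^N(b)\|_2^2+2\int_a^b D^N\,d\tau=\|v^N(a)\|_2^2$. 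Because $p<2$ and $\mu>0$ one has $D^N(\tau)\le\mu^{(p-2)/2}\|\cD v^N(\tau)\|_2^2\le C_\mu\|\n v^N(\tau)\|_2^2$.

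Next I would set up the two families of sets. The full measure of $\mathcal T$ follows by intersecting the a.e.-in-time convergence $\|\n v^N(\tau)\|_p\to\|\n v(\tau)\|_p$, extracted (along a subsequence) from the strong convergence in $L^q(0,T;J^{1,p}_{per}(\O))$ for $q<p$, with the a.e. convergence $\|\n v^N(\tau)\|_2\to\|\n v(\tau)\|_2$ furnished by Proposition~\ref{existence}. Continuity in time of $\tau\mapsto\|\n v^N(\tau)\|_2$ makes $J_N(\alpha)=\{\tau\in(s,t):\|\n v^N(\tau)\|_2^{2\gamma}>\tan\alpha\}$ open, so it is a countable disjoint union of intervals $(s_h(N,\alpha),t_h(N,\alpha))$; the uniform bound of Lemma~\ref{zetadef} together with Chebyshev's inequality gives $|J_N(\alpha)|\le C(\tan\alpha)^{-1}\to0$ as $\alpha\to\tfrac\pi2^-$, uniformly in $N$.

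The core step is the passage to the limit on the good set $(s,t)\setminus J_N(\alpha)$. For $\tau\in\mathcal T$ I would upgrade weak to strong $L^2(\O)$ convergence of $\cD v^N(\tau)$: the strong $L^p(\O)$ convergence (at a.e.\ $\tau$) identifies the weak $L^2(\O)$ limit as $\cD v(\tau)$, while convergence of the norms holds because $\|\cD w\|_2^2=\tfrac12\|\n w\|_2^2$ for divergence-free periodic fields, so $\mathcal T$ also controls $\|\cD v^N(\tau)\|_2$; weak convergence plus convergence of norms in the Hilbert space $L^2(\O)$ then yields $\cD v^N(\tau)\to\cD v(\tau)$ strongly. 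Continuity of $\xi\mapsto(\mu+|\xi|^2)^{(p-2)/2}|\xi|^2$ and the bound $0\le(\mu+|\xi|^2)^{(p-2)/2}|\xi|^2\le\mu^{(p-2)/2}|\xi|^2$ (hence uniform integrability) give $D^N(\tau)\to D(\tau):=\|(\mu+|\cD v|^2)^{(p-2)/4}\cD v(\tau)\|_2^2$ for a.e.\ $\tau$. To integrate in time I would choose $\alpha$ so that the level set $\{\|\n v(\tau)\|_2^{2\gamma}=\tan\alpha\}$ is null (all but countably many $\alpha$ qualify), whence $\chi_{(s,t)\setminus J_N(\alpha)}\to\chi_{(s,t)\setminus J(\alpha)}$ a.e.; since on the good set $D^N\le\mu^{(p-2)/2}(\tan\alpha)^{1/\gamma}$ is bounded, dominated convergence gives $\int_{(s,t)\setminus J_N(\alpha)}D^N\,d\tau\to\int_{(s,t)\setminus J(\alpha)}D\,d\tau$.

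Finally I would close the argument. Writing the exact balance as $2\int_{J_N(\alpha)}D^N\,d\tau=\|v^N(s)\|_2^2-\|v^N(t)\|_2^2-2\int_{(s,t)\setminus J_N(\alpha)}D^N\,d\tau$ and letting $N\to\infty$ (the endpoint norms converge since $s,t\in\mathcal T$) shows that the bad-set integral converges, so its $\limsup$ equals its limit, namely $\|v(s)\|_2^2-\|v(t)\|_2^2-2\int_{(s,t)\setminus J(\alpha)}D\,d\tau$. Letting $\alpha\to\tfrac\pi2^-$ and using $|J(\alpha)|\to0$ together with $D\in L^1(s,t)$ (as $D(\tau)\le C\|\n v(\tau)\|_p^p$) produces the stated equality with $M(s,t)$ in its first form; applying the exact balance on each component $(s_h,t_h)$ of $J_N(\alpha)$ and summing gives $2\int_{J_N(\alpha)}D^N\,d\tau=-\sum_h(\|v^N(t_h)\|_2^2-\|v^N(s_h)\|_2^2)$, which is the second form. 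The main obstacle is precisely this good-set limit: only weak $L^p$ convergence of $\cD v^N$ is available globally, so to pass to the limit in the nonlinear dissipation one must exploit the pointwise-in-time strong convergence encoded in $\mathcal T$, and one must cope with the fact that the truncation sets $J_N(\alpha)$ depend on $N$.
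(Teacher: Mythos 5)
Your route is genuinely different from the paper's, and its core is sound. The paper never splits the exact Galerkin balance over $J_N(\alpha)$ and its complement directly; instead it multiplies the energy identity by the continuous weight $P(\alpha,\rho_N(\tau))$, integrates by parts in time, and then controls the resulting boundary terms $\sum_h(\|v^N(t_h)\|_2^2-\|v^N(s_h)\|_2^2)$ and derivative terms via the bounded-variation-type estimate of Lemma~\ref{zetadef} together with a Fatou argument on $\limsup_N J_N(\alpha)$. You bypass all of that: on the good set $(s,t)\setminus J_N(\alpha)$ you have the uniform bound $\wt\rho_N(\tau)\le c\,\rho_N(\tau)^{p/2}\le c(\tan\alpha)^{p/(2\gamma)}$, the pointwise convergence $\wt\rho_N(\tau)\to\wt\rho(\tau)$ on $\mathcal T$ (which the paper also proves, via the inequality from Diening--Ebmeyer--R\r{u}\v{z}i\v{c}ka), and the a.e. convergence of the indicators provided $\alpha$ avoids the at most countably many levels where $\{\rho^\gamma=\tan\alpha\}$ has positive measure; dominated convergence then identifies $\lim_N\int_{J_N(\alpha)}\wt\rho_N\,d\tau$ directly, and the second expression for $M(s,t)$ is immediate from \eqref{NSM-II} on each component $(s_h,t_h)$. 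What the weight function buys the authors is continuity in $\rho$, so no exceptional set of $\alpha$'s ever appears; what your approach buys is the elimination of the integration by parts and of Lemma~\ref{zetadef} from the main identity. You should, however, make explicit that $\alpha\mapsto\int_{J_N(\alpha)}\wt\rho_N\,d\tau$ is nonincreasing, so that the limit as $\alpha\to\frac\pi2^-$ taken along your good $\alpha$'s coincides with the full limit required in the statement.

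There is one concrete error: the uniform decay $|J_N(\alpha)|\to0$ cannot be obtained from Lemma~\ref{zetadef} by Chebyshev. That lemma bounds $\int_0^t(1+\rho_N)^{-\zeta}|\rho_N'|\,d\tau$, i.e. the total variation of a bounded increasing function of $\rho_N$; a sequence with $\rho_N$ constantly equal to a large value has zero variation yet $|J_N(\alpha)|=t-s$, so no measure estimate follows, and your claimed bound $C(\tan\alpha)^{-1}$ is not what any Chebyshev argument here would produce. The correct tool is the uniform-in-$N$ bound $\int_0^T\|\n v^N\|_2^\beta\,d\tau\le C(\|v_0\|_2)$, which follows from \eqref{CL-I}, \eqref{NSM-II} and the interpolation \eqref{gradL2byD2LpuL2}; Chebyshev then gives $|J_N(\alpha)|\le C(\tan\alpha)^{-\beta/(2\gamma)}$, which is exactly how the paper argues (its estimate for $|J(\alpha)|$, applied verbatim to $v^N$). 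Since this bullet point is part of the theorem's assertion, you must repair this step; once repaired, the rest of your argument goes through.
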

We remark that the gap expressions for our problem and the Newtonian case (see \cite{CGM1}) coincide, except that the $L^2$-norm of the gradients of solutions to the Navier-Stokes equations is replaced by the $L^p$-norm in the power-law system. Furthermore, in both cases, the additional dissipation, which quantifies the potential gap from the classical energy equality, is expressed only in terms of energy-related quantities. From a physical point of view, the energy relation would add a dissipative quantity which is not justifiable. Thus, the question arises of investigating the nature of these additional dissipation terms: they could be due to turbulence phenomena or to the weak regularity properties of the solution.
\par The plan of the paper is as follows: in Section \ref{PR}, we present some preliminary results, in particular proving the strong convergence of gradients; in Section \ref{EG}, we introduce the auxiliary weight function and provide estimates for the energy gap.

\section{\label{PR}Some preliminary results}
We start with the following known results.
\begin{lemma}\label{TINT}{\sl Let   $u\in W^{2,q}(\OO)\cap J^{1,q}_{per}(\OO) $. Then, there exists a constant $c$ independent of $u$ such that    
\be\label{INTIII}\dm u\dm_q+\dm \nabla u\dm_q\!\leq\! c\dm D^2  u\dm_q.\ee}\end{lemma}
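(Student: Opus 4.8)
The plan is to reduce the claim to two successive applications of the Poincar\'e--Wirtinger inequality on the cube $\OO$, exploiting two independent sources of vanishing averages: the zero mean of $u$ itself, which is built into the space $J^{1,q}_{per}(\OO)$, and the zero mean of each first derivative $\partial_i u$, which follows from the space-periodicity of $u$. Recall that on a bounded convex (hence connected Lipschitz) domain, for every $f\in W^{1,q}(\OO)$ with $1\le q\le\infty$ one has $\|f-\overline f\|_q\le c_\OO\,\|\nabla f\|_q$, where $\overline f$ denotes the average of $f$ over $\OO$ and $c_\OO$ depends only on $\OO$ and $q$. This is the only external ingredient, and its constant is manifestly independent of the particular function.

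First I would establish the estimate for $u\in\mathcal V$, where everything is smooth and periodic, and then extend to the whole space $W^{2,q}(\OO)\cap J^{1,q}_{per}(\OO)$ by density, using that the three quantities $\|u\|_q$, $\|\nabla u\|_q$, $\|D^2u\|_q$ are continuous with respect to $W^{2,q}$-convergence, and that the average functional $u\mapsto\overline u$ is continuous on $L^q(\OO)$, so that $\overline u=0$ is preserved in the closure. For smooth periodic $u$ with $\overline u=0$, the first Poincar\'e--Wirtinger inequality gives directly
$$\|u\|_q\le c\,\|\nabla u\|_q.$$
For the gradient, the key point is that $\overline{\partial_i u}=0$ for every $i$: integrating by parts against the constant $1$, $\intl\OO\partial_i u\,dx=\intl\po u\,n_i\,dS=\intl{\Gamma_{i+3}}u\,dS-\intl{\Gamma_i}u\,dS$, and the two contributions cancel, since by periodicity $u$ takes the same values on the opposite faces $\Gamma_i$ and $\Gamma_{i+3}$, while the outward normals have opposite $i$-th components. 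Hence I may apply Poincar\'e--Wirtinger to each $\partial_i u$ to obtain $\|\partial_i u\|_q\le c\,\|\nabla\partial_i u\|_q\le c\,\|D^2u\|_q$; summing over $i$ yields $\|\nabla u\|_q\le c\,\|D^2u\|_q$. Combining the two estimates gives $\|u\|_q+\|\nabla u\|_q\le(1+c)\|\nabla u\|_q\le c\,\|D^2u\|_q$, which is \rf{INTIII}.

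The only genuinely delicate step is the averaging identity $\overline{\partial_i u}=0$, which rests on the periodic boundary conditions rather than on any decay or Dirichlet condition; I would make it rigorous first on $\mathcal V$, where the trace identifications on opposite faces hold pointwise, and then invoke density, recalling that membership in $J^{1,q}_{per}(\OO)$ transmits the matching of traces on $\Gamma_i$ and $\Gamma_{i+3}$ through continuity of the trace operator. An equivalent, and perhaps cleaner, route in the model case $q=2$ is to expand $u$ in its Fourier series on the periodic cube, where the vanishing zero mode removes the constant term and the elementary bound $|k|\ge 1$ for nonzero integer frequencies $k$ immediately produces the chain $\|u\|_2\le c\|\nabla u\|_2\le c\|D^2u\|_2$. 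This makes transparent why periodicity together with zero average is exactly what renders the second-order control of the function possible, although for general $q$ the Poincar\'e--Wirtinger argument above is the one I would carry through.
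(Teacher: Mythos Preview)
Your argument is correct but follows a different route from the paper. The paper proves \rf{INTIII} by a compactness--contradiction argument of Peetre type: starting from the standard bound $\|\nabla u\|_q\le C(\|D^2u\|_q+\|u\|_q)$, it assumes $\|u\|_q\le C\|D^2u\|_q$ fails, normalises to get a sequence $v_m$ with $\|v_m\|_q=1$, $\|D^2v_m\|_q<1/m$, extracts a weak $W^{2,q}$~/ strong $L^q$ limit $v$ with $D^2v=0$, hence $v=a+b\cdot x$; periodicity forces $b=0$ and the zero-mean condition forces $a=0$, contradicting $\|v\|_q=1$. You instead argue constructively via two Poincar\'e--Wirtinger inequalities, using the zero mean of $u$ and then the zero mean of each $\partial_i u$ coming from the cancellation of traces on opposite faces. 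Your approach is more explicit and makes transparent which two moment conditions drive the estimate; the paper's approach is a robust template that works whenever the kernel of $D^2$ (affine functions) meets the function space trivially, without having to identify specific vanishing averages. One minor remark: your phrase ``extend to the whole space by density'' would require $\mathcal V$ to be $W^{2,q}$-dense in $W^{2,q}(\OO)\cap J^{1,q}_{per}(\OO)$, which is not part of the given definitions; but this is cosmetic, since once the zero means of $u$ and of $\partial_i u$ are secured (and these do pass through the $W^{1,q}$-closure defining $J^{1,q}_{per}$, via continuity of the mean and of the trace), Poincar\'e--Wirtinger applies directly to any $u\in W^{2,q}(\OO)\cap J^{1,q}_{per}(\OO)$ without a further approximation step.
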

\begin{proof} The result of the lemma is an easy adaptation to the space-periodic case of Lemma 2.7 in \cite{MS-Annali}.
\\It is well known that there exists a constant C, independent of $u$, such that
$$||\nabla u||_q \leq C (||D^2 u ||_q + ||u||_q),$$ 
and we prove that $||u||_q \leq C ||D^2u||_q$. We argue exactly as in \cite{MS-Annali}: we assume that for any $m\in \N$, there exists $u_m(x)\in W^{2,q}(\OO)\cap J^{1,q}_{per}(\OO) $ such that $||u_m||_q > m ||D^2u_m||_q$. So, we can define $v_m (x) := \frac{u_m(x)}{||u_m||_q}$ and there holds $||v_m||_q=1$ and $||D^2v_m||_q< \frac{1}{m}.$
Therefore, there exists a subsequence $\lbrace v_{m_k}(x) \rbrace$ converging weakly in $ W^{2,q}(\O)$ and strongly in $L^q(\O)$ to a function $v$ such that $||v||_q=1$ and $||D^2v||_q=0$. From this property, we deduce that $v(x) = a + b \cdot x$.  However,  since $v\in J^{1,q}_{per}(\OO)$, the periodicity condition ensures that $b=0$, and the zero-mean condition implies that $a=0$. This contradicts $||v||_q=1$.
\\\end{proof}
\begin{lemma}[Friedrichs's lemma]\label{FR}{\sl For all $\vep>0,$ there exists  $\kappa\in \N$ such that
\be\label{FRI}\dm u\dm_2\leq(1+\vep) \mbox{${\overset{\kappa}{\underset{j=1}\sum}}$}(u,a^j)+\vep\dm\nabla u\dm_q,\mbox{ for all }u\in W^{1,q}(\OO)\,,\ee
for any $q>\frac 65$, where $\{a^j\}$ is an orthonormal basis of $L^2(\OO)$}\,.\end{lemma}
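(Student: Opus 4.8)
The plan is to argue by contradiction, exploiting the compactness of the Sobolev embedding $W^{1,q}(\OO)\hookrightarrow L^2(\OO)$, which is precisely what forces the restriction $q>\frac65$: on the bounded domain $\OO\subset\R^3$ one has $W^{1,q}(\OO)\hookrightarrow L^{q^*}(\OO)$ with $q^*=\frac{3q}{3-q}$, and $q^*>2$ holds exactly when $q>\frac65$, so the embedding into $L^2(\OO)$ is compact. It is convenient to read the inequality in its Hilbertian (squared) form, i.e. to prove $\|u\|_2^2\le(1+\vep)\sum_{j=1}^\kappa|(u,a^j)|^2+\vep\|\nabla u\|_q^2$, from which the stated bound follows; the orthonormality of $\{a^j\}$ and Parseval's identity $\|u\|_2^2=\sum_{j\ge1}|(u,a^j)|^2$ are the only facts I would use about the basis.

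Suppose the conclusion fails. Then there is an $\vep_0>0$ such that for every $\kappa\in\N$ one can find $u_\kappa\in W^{1,q}(\OO)$ violating the inequality; after normalizing so that $\|u_\kappa\|_2=1$, I obtain
$$1>(1+\vep_0)\sum_{j=1}^\kappa|(u_\kappa,a^j)|^2+\vep_0\|\nabla u_\kappa\|_q^2.$$
In particular $\|\nabla u_\kappa\|_q^2<\vep_0^{-1}$, so $\{u_\kappa\}$ is bounded in $W^{1,q}(\OO)$. By the compact embedding I extract a subsequence, still denoted $u_\kappa$, converging strongly in $L^2(\OO)$ (and weakly in $W^{1,q}(\OO)$) to some limit $u$ with $\|u\|_2=1$.

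To reach the contradiction I fix $K\in\N$. For every $\kappa\ge K$, monotonicity of the partial sums gives $\sum_{j=1}^K|(u_\kappa,a^j)|^2\le\sum_{j=1}^\kappa|(u_\kappa,a^j)|^2<\frac1{1+\vep_0}$. Strong $L^2$ convergence lets me pass to the limit in each of the finitely many coefficients, $(u_\kappa,a^j)\to(u,a^j)$, whence $\sum_{j=1}^K|(u,a^j)|^2\le\frac1{1+\vep_0}$. Letting $K\to\infty$ and invoking Parseval yields $\|u\|_2^2\le\frac1{1+\vep_0}<1$, contradicting $\|u\|_2=1$, and the lemma follows.

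The only genuinely delicate point is the interchange of the two limiting processes, namely the growing truncation index $\kappa$ of the Fourier expansion and the passage to the limit along the compactness subsequence. I would handle this cleanly by first freezing the finite index $K$, using monotonicity of the partial sums to discard the tail, passing to the limit in the finitely many coefficients, and only afterwards sending $K\to\infty$. A secondary point worth stressing is that the uniform gradient bound needed to initiate the compactness argument is itself produced by the normalization together with the assumed failure of the inequality; the term $\vep_0\|\nabla u_\kappa\|_q^2$ is therefore \emph{essential} rather than incidental, since without it the $W^{1,q}(\OO)$-norm could not be controlled.
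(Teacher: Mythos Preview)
Your argument is correct and is precisely the classical contradiction/compactness proof of Friedrichs' inequality; the paper does not give its own proof but simply cites \cite{LSU}, Ch.~II, Lemma~2.4, whose proof proceeds exactly along these lines. Your reading of the inequality in squared form with $|(u,a^j)|^2$ is the intended one (the displayed statement in the paper has an evident typo), and your handling of the double limit via the freeze-$K$/monotonicity trick is clean.
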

\begin{proof} This result  is a generalization of the well known Friedrichs' lemma to $q \not= 2$. The proof is given in \cite{LSU}, Ch.II, Lemma 2.4. 
\end{proof}
For a sufficiently smooth $u$, we set
\be\label{Ip}
I_p(u):=\int_\OO(\mu+|\cD u|^2)^\frac{p-2}2|\n\cD u|^2\, dx\,.
\ee
In Lemma \ref{LSD} below, we collect some useful inequalities. For completeness we prove it, although similar inequalities are already known (\cite{BDR, MNRR}).
\begin{lemma}\label{LSD}{\sl Let $u\in C^2_{per}(\OO)$ with vanishing mean value, and $\mu>0$. For any $p\in (1,2)$, there exists a constant c, independent of $\mu$, such that
\begin{align}\label{SD1}
\|D^2 u\|_{p} & \leq c\, I_p(u)^\frac 12 \|(\mu+|\cD u|^2)^\frac 12\|_p^{\frac{2-p}{2}},\\
\label{SD4}
\|(\mu+|\cD u|^2)^\frac 12\|_{p}^{\frac p2} & \leq c(I_p(u)^\frac 12+\mu^\frac p4),\\
\label{SD2}
\|\n u\|_{3p} & \leq c(I_p(u)^\frac 1p+\mu^\frac 12)\,. 
\end{align}
}
\end{lemma}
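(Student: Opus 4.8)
The plan is to prove the three inequalities in the order \eqref{SD1}, \eqref{SD4}, \eqref{SD2}, since each one feeds the next. The starting point for \eqref{SD1} is the elementary pointwise identity
$$\partial_k\partial_j u_i=\partial_j(\cD u)_{ik}+\partial_k(\cD u)_{ij}-\partial_i(\cD u)_{jk},$$
valid for all $i,j,k$ because second derivatives commute, which yields the pointwise bound $|D^2u|\le c\,|\n\cD u|$. Writing $w:=(\mu+|\cD u|^2)^{1/2}$ and inserting the factor $w^{(p-2)p/2}w^{(2-p)p/2}=1$, I would split
$$|\n\cD u|^p=\big(w^{p-2}|\n\cD u|^2\big)^{p/2}\,w^{(2-p)p/2}$$
and apply H\"older's inequality with the conjugate exponents $\tfrac2p$ and $\tfrac2{2-p}$. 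The first factor integrates to $I_p(u)^{p/2}$ and the second to $\|w\|_p^{p(2-p)/2}$; taking $p$-th roots gives \eqref{SD1}.

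For \eqref{SD4} the key device is the auxiliary function $g:=(\mu+|\cD u|^2)^{p/4}=w^{p/2}$, which is of class $C^1$ precisely because $\mu>0$ keeps us away from the singularity of $t\mapsto t^{p/4}$ at the origin. A chain-rule computation together with $|\cD u|\le w$ gives $|\n g|\le\tfrac p2\,w^{(p-2)/2}|\n\cD u|$, hence $\|\n g\|_2^2\le c\,I_p(u)$. Now observe that $\|g\|_2=\|w\|_p^{p/2}=:X$, so that $X^2=\|w\|_p^p=\int_\OO(\mu+|\cD u|^2)^{p/2}\,dx$. Using the subadditivity $(\mu+|\cD u|^2)^{p/2}\le\mu^{p/2}+|\cD u|^p$ (valid since $p/2\le1$), Lemma~\ref{TINT} in the form $\|\cD u\|_p\le\|\n u\|_p\le c\|D^2u\|_p$, and then \eqref{SD1}, I arrive at
$$X^2\le c\,\mu^{p/2}|\OO|+c\,I_p(u)^{p/2}X^{2-p}.$$
The last term is absorbed by Young's inequality with exponents $\tfrac2{2-p}$ and $\tfrac2p$, leaving $\tfrac12X^2\le c\,(\mu^{p/2}+I_p(u))$, which is exactly \eqref{SD4}.

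Finally, \eqref{SD2} follows from the Sobolev embedding $W^{1,2}(\OO)\hookrightarrow L^6(\OO)$ applied to $g$. Combining the gradient bound $\|\n g\|_2\le c\,I_p(u)^{1/2}$ with \eqref{SD4} yields $\|g\|_6\le c\,(I_p(u)^{1/2}+\mu^{p/4})$. Since $\|g\|_6=\|w\|_{3p}^{p/2}\ge\|\cD u\|_{3p}^{p/2}$, raising to the power $2/p$ and using $(a+b)^{2/p}\le c\,(a^{2/p}+b^{2/p})$ gives $\|\cD u\|_{3p}\le c\,(I_p(u)^{1/p}+\mu^{1/2})$. A Korn inequality in $L^{3p}$ for periodic, mean-free fields, $\|\n u\|_{3p}\le c\|\cD u\|_{3p}$, then produces \eqref{SD2}. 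Throughout, the subadditivity, H\"older and Young steps introduce only $p$-dependent factors and the embedding and Korn constants are purely geometric, so $c$ remains independent of $\mu$ as claimed. The genuinely delicate point is the bootstrapping in \eqref{SD4}: one must verify that the exponent of $X$ generated by \eqref{SD1} satisfies $0<2-p<2$, i.e.\ $0<p<2$, so that the offending term can be absorbed by Young's inequality — this is precisely where the restriction $p<2$ is used.
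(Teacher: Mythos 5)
Your proof is correct and follows essentially the same route as the paper's: the same H\"older split with exponents $\tfrac2p$, $\tfrac2{2-p}$ and the pointwise bound $|D^2u|\le c|\nabla\cD u|$ for \eqref{SD1}; the same absorption via Lemma~\ref{TINT}, \eqref{SD1} and Young's inequality for \eqref{SD4}; and the same Sobolev embedding of $(\mu+|\cD u|^2)^{p/4}$ combined with Korn's inequality for \eqref{SD2}. The only differences are cosmetic (working with $X^2$ rather than $\|\cD u\|_p^{p/2}$), and your gradient bound $\|\nabla(\mu+|\cD u|^2)^{p/4}\|_2\le c\,I_p(u)^{1/2}$ is in fact the correct exponent, consistent with the stated \eqref{SD2}.
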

\begin{proof}
Inequality \eqref{SD1} is standard and follows from  H\o lder's inequality with exponents $\frac 2p$ and $\frac{2}{2-p}$ and the pointwise inequality $|D^2 u|\leq c|\n \cD u|$:
\begin{align*}
 \|D^2 u\|_{p}^p &= \int_{\OO}(\mu+|\cD u|^2)^\frac {p(p-2)}{4}|D^2 u|^p(\mu+|\cD u|^2)^\frac {p(2-p)}{4}dx\\
&\leq c
 I_p(u)^{\frac p2} \|(\mu+|\cD u|^2)^\frac 12\|_p^{\frac{p(2-p)}{2}}.
\end{align*}
 For proving \eqref{SD4}, we first observe that, due to the periodicity of $u$ and the mean-value property, one can use Lemma \ref{TINT} to find
 \be\label{SD3}\|\cD u\|_p\leq c\|D^2 u\|_p.\ee
 Hence, employing estimate \eqref{SD1} in \eqref{SD3} and then Young's inequality with exponents $\frac 2p$ and $\frac {2}{2-p}$, we get 
 $$\|\cD u\|_p^\frac p2\leq  c\, I_p(u)^\frac p4 \|(\mu+|\cD u|^2)^\frac 12\|_p^{\frac{p(2-p)}{4}}\leq  \varepsilon   \|(\mu+|\cD u|^2)^\frac 12\|_p^{\frac{p}{2}}+c(\varepsilon)I_p(u)^\frac 12.$$
 Therefore, we have 
 $$\|(\mu+|\cD u|^2)^\frac 12\|_{p}^{\frac p2}\leq
  c\, \mu^\frac p4+\|\cD u\|_p^\frac p2\leq  c\, \mu^\frac p4+\varepsilon   \|(\mu+|\cD u|^2)^\frac 12\|_p^{\frac{p}{2}}+c(\varepsilon)I_p(u)^\frac 12,$$
  that gives estimate \eqref{SD4}.\\
Finally, let us prove inequality \eqref{SD2}. By Korn's inequality, there holds
 $$\|\n u\|_{3p}\leq c\|\cD u\|_{3p}\leq c\|(\mu+|\cD u|^2)^\frac 12\|_{3p}= c\|(\mu+|\cD u|^2)^\frac p4\|_{6}^\frac 2p.$$
 By Sobolev's embedding
 \be\label{SET}\|(\mu+|\cD u|^2)^\frac p4\|_{6}^\frac 2p\leq c(\|\nabla(\mu+|\cD u|^2)^\frac p4\|_{2}^\frac 2p+\|(\mu+|\cD u|^2)^\frac p4\|_{2}^\frac 2p).\ee
 By a direct calculation,  for the first term in \eqref{SET} we have  $$\|\nabla(\mu+|\cD u|^2)^\frac p4\|_{2}\leq c\,I_p(u),$$
while we use estimate \eqref{SD4} raised to the power of $\frac 2p$  for the second one, and we get \eqref{SD2}. \end{proof}
For the existence of a weak solution $(v,\pi)$ to \rf{NS}-\rf{NSb}, we can employ the well-known Faedo-Galerkin method, as proposed in \cite{MNRR}, Chapter 5, Section 3. 
\par Let  $v^N$ be defined as 
$$v^N(t,x):=\sum_{r=1}^N c_r^N(t) a^r(x),$$ 
where the functions $a^r(x)$, in $L^2$-theory, are eigenvectors of the Stokes operator, with the corresponding eigenvalues $\lambda_r$, and the coefficients $c_r^N(t)$ are determined in such a way $v^N$ satisfies the following properties:
\be\label{ap1}\ba{ll}\displaystyle\vs1
(v^N_t,a^r)+\left( 
(\mu+|\cD v^N|^2)^\frac{p-2}2\cD v^N, \cD a^r \right)+(v^N\cdot\nabla v^N, a^r)=0\,,\  r=1,\cdots,N,\\
v^N(0)= \displaystyle \sum_{r=1}^N(v_0,a^r)a^r.\ea\ee  
\par In the following Lemma, we establish several a priori estimates for such approximating sequence.
\begin{lemma}\label{LJAM}{\sl Let $p\in (\frac 95,2)$ and let $v^N$ be  solutions to the Galerkin system \eqref{ap1}. Then there exists a constant $C$ such that
\be\label{st1}
\|v^N\|_{L^\infty((0,T);J^{2}_{per}(\OO))}+\|v^N\|_{L^p((0,T);J^{1,p}_{per}(\OO))}\leq C, \ee
\be\label{NSM-II}\dm v^N(t)\dm_2^2+2\intll st\dm(\mu+|\cD v^N|^2)^\frac{p-2}{4} \cD v^N\dm_2^2 d\tau= \dm v^N(s)\dm_2^2\leq \dm v_0\dm_2^2\,.\ee
Moreover, for all $T>0$  the sequence $\{v^N\}_{N \in \N}$ satisfies the estimate
\be\label{CL-I} \intll0{T}\dm D^2 v^N(t)\dm_p^{2\beta}dt\leq C(\|v_0\|_2)
, \ \ \beta=\frac{p(5p-9)}{2(-p^2+8p-9)}\,,\ee }
uniformly in $N\in\N$.
\end{lemma}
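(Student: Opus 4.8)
The plan is to obtain \eqref{NSM-II} and \eqref{st1} by the standard energy method, and the crucial estimate \eqref{CL-I} by a second energy estimate obtained testing with the Stokes operator. For \eqref{NSM-II} I would multiply the $r$-th Galerkin equation in \eqref{ap1} by $c_r^N(\tau)$ and sum over $r=1,\dots,N$, so that the test field becomes $v^N$ itself. The time term produces $\tfrac12\tfrac{d}{d\tau}\|v^N\|_2^2$, the convective term $(v^N\cdot\n v^N,v^N)$ vanishes after integration by parts since $\n\cdot v^N=0$ and $v^N$ is space-periodic, and the stress term equals $\|(\mu+|\cD v^N|^2)^{(p-2)/4}\cD v^N\|_2^2$; integrating over $(s,t)$ gives \eqref{NSM-II}, while $\|v^N(s)\|_2\le\|v_0\|_2$ follows because the initial $L^2$-projection is nonexpansive. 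The $L^\infty_tL^2$ part of \eqref{st1} is then immediate, and for the $L^p_tW^{1,p}$ part I would use the pointwise bound $(\mu+|\cD v^N|^2)^{(p-2)/2}|\cD v^N|^2\ge c|\cD v^N|^p-c\mu^{p/2}$ (sharp on $\{|\cD v^N|^2\ge\mu\}$, the complementary set contributing a bounded term) together with Korn's inequality, converting the dissipation controlled by \eqref{NSM-II} into $\int_0^T\|\n v^N\|_p^p\,d\tau\le C$.

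The heart of the lemma is \eqref{CL-I}. I would test \eqref{ap1} with $-\Delta v^N$, which is an admissible combination since the $a^r$ are Stokes eigenfunctions (equivalently, multiply the $r$-th equation by $\lambda_r c_r^N$). The time term gives $\tfrac12\tfrac{d}{dt}\|\n v^N\|_2^2$, and the monotonicity/ellipticity of the power-law stress bounds the elliptic term below by $c\,I_p(v^N)$. For the convective term I would use the divergence-free identity $(v^N\cdot\n v^N,-\Delta v^N)=\int_\OO\partial_j v^N_k\,\partial_k v^N_i\,\partial_j v^N_i\,dx$, so that it is controlled by $c\|\n v^N\|_3^3$. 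Interpolating $\|\n v^N\|_3\le\|\n v^N\|_p^{1-\lambda}\|\n v^N\|_{3p}^{\lambda}$ with $\lambda=\tfrac{3-p}{2}$ and invoking \eqref{SD2} to replace $\|\n v^N\|_{3p}$ by $I_p(v^N)^{1/p}+\mu^{1/2}$, one obtains a bound proportional to $\|\n v^N\|_p^{3(p-1)/2}\,I_p(v^N)^{q}$ with $q=\tfrac{9-3p}{2p}$. The decisive point is that $q<1$ precisely when $p>\tfrac95$, so Young's inequality absorbs a fraction of $I_p(v^N)$ and yields the differential inequality $\tfrac12\tfrac{d}{dt}\|\n v^N\|_2^2+c\,I_p(v^N)\le C\|\n v^N\|_p^{m}+C_\mu$ with $m=\tfrac{3(p-1)/2}{1-q}=\tfrac{3p(p-1)}{5p-9}$.

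The main obstacle is that the datum lies only in $J^2_{per}$, so $\|\n v^N(0)\|_2^2$ is not bounded uniformly in $N$ and one cannot integrate the differential inequality to a uniform bound on $\int_0^T I_p(v^N)\,d\tau$; only a fractional-in-time estimate survives. Writing the inequality as $\tfrac c2 I_p(v^N)\le C\|\n v^N\|_p^m-\tfrac12\tfrac{d}{dt}\|\n v^N\|_2^2=:\Phi_N\ge0$ and raising to the power $\theta:=\tfrac{5p-9}{3(p-1)}\in(0,1)$, subadditivity gives $I_p(v^N)^\theta\le C\|\n v^N\|_p^{m\theta}+C\big(\tfrac{d}{dt}\|\n v^N\|_2^2\big)_-^{\theta}$; the exponents are tuned so that $m\theta=p$, whence the first term is integrable by the energy bound $\int_0^T\|\n v^N\|_p^p\,d\tau\le C$. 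Controlling the contribution of the negative part of the time derivative uniformly in $N$ is the delicate step, and it is exactly here that the threshold $p=\tfrac95$ (where $\theta\to0$) makes the whole estimate degenerate.

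Granting the fractional bound $\int_0^T I_p(v^N)^\theta\,d\tau\le C$, I would conclude by inserting \eqref{SD1} in the form $\|D^2v^N\|_p^{2\beta}\le c\,I_p(v^N)^\beta\,\|(\mu+|\cD v^N|^2)^{1/2}\|_p^{\beta(2-p)}$ and applying Hölder in time with exponents placing $I_p(v^N)^\beta$ in $L^{\theta/\beta}$ and the stress factor in $L^{p/(\beta(2-p))}$. The conjugacy constraint $\tfrac{\beta}{\theta}+\tfrac{\beta(2-p)}{p}=1$ forces $\beta=\tfrac{\theta p}{p+\theta(2-p)}$, and substituting $\theta=\tfrac{5p-9}{3(p-1)}$ gives exactly $\beta=\tfrac{p(5p-9)}{2(-p^2+8p-9)}$, which is the value in \eqref{CL-I}. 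The two displayed arithmetic identities, $m\theta=p$ and $p+\theta(2-p)=\tfrac{2(-p^2+8p-9)}{3(p-1)}$, are the algebraic core making the exponent $2\beta$ the sharp one compatible with the energy bound.
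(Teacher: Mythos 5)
Your treatment of \eqref{NSM-II}, of \eqref{st1}, and the opening of the second energy estimate (testing with $-\Delta v^N$, reducing the convective term to $\|\nabla v^N\|_3^3$ as in \eqref{ap3}, interpolating against $\|\nabla v^N\|_{3p}$ and invoking \eqref{SD2}, then absorbing a fraction of $I_p(v^N)$ by Young, with the threshold $p>\tfrac95$ appearing exactly where you say) all match the paper. The final Hölder step is also sound arithmetic: the conjugacy constraint does force $\beta=\tfrac{p(5p-9)}{2(-p^2+8p-9)}$. The problem is the middle, which is precisely the step you label ``delicate'' and then do not carry out. Your single interpolation \eqref{c1} yields the differential inequality $\tfrac12\tfrac{d}{dt}\|\nabla v^N\|_2^2+c\,I_p(v^N)\le C\|\nabla v^N\|_p^{m}+C_\mu$ with $m=\tfrac{3p(p-1)}{5p-9}>p$, whose right-hand side is \emph{not} integrable uniformly in $N$ from the energy bound $\int_0^T\|\nabla v^N\|_p^p\,d\tau\le C$. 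Raising to the power $\theta$ repairs the $\|\nabla v^N\|_p^{m\theta}=\|\nabla v^N\|_p^{p}$ term but leaves $\int_0^T\bigl(\tfrac{d}{dt}\|\nabla v^N\|_2^2\bigr)_-^{\theta}\,d\tau$, and there is no available control on this quantity: the positive part of the derivative is bounded only by the non-integrable $C\|\nabla v^N\|_p^m$, and $\|\nabla v^N(0)\|_2$ is not bounded uniformly in $N$ for $v_0\in J^2_{per}(\Omega)$, so neither the total variation nor any unweighted fractional power of the derivative is under control. In effect, your claimed intermediate bound $\int_0^T I_p(v^N)^\theta\,d\tau\le C$ is essentially equivalent to the conclusion \eqref{CL-I} and is asserted rather than proved.

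The paper closes exactly this gap with a different device that your argument is missing. It uses \emph{two} interpolations, \eqref{c1} and \eqref{c2}, splitting $\|\nabla v^N\|_3^3=\|\nabla v^N\|_3^{3\alpha}\|\nabla v^N\|_3^{3(1-\alpha)}$ and tuning the free parameter $\alpha=\tfrac{p(3p-5)}{6(p-1)}$ so that after Young the exponent of $\|\nabla v^N\|_p$ is exactly $p$ and all the excess is shifted onto powers of $\|\nabla v^N\|_2^2$; this produces \eqref{app2}, where the right-hand side is $c(\mu)(1+\|\nabla v^N\|_p^p)(1+\|\nabla v^N\|_2^2)^{\lambda}$ with $\lambda=\tfrac{2(3-p)}{3p-5}>1$. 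Dividing the whole inequality by $(1+\|\nabla v^N\|_2^2)^{\lambda}$ \emph{before} integrating turns the time-derivative term into an exact derivative of $-\tfrac1{\lambda-1}(1+\|\nabla v^N\|_2^2)^{1-\lambda}$, whose boundary values are bounded by $\tfrac1{\lambda-1}$ independently of the size of $\|\nabla v^N(0)\|_2$, while the right-hand side collapses to $c(\mu)(1+\|\nabla v^N\|_p^p)$, which is integrable by \eqref{st1}. This yields the weighted bound \eqref{app4}, and the weight is then removed by the reverse H\"older bound on $I_p$ together with the interpolation $\|\nabla v^N\|_2\le\|\nabla v^N\|_p^{\frac{5p-6}{2p}}\|D^2v^N\|_p^{\frac{3(2-p)}{2p}}$ and a final H\"older--Young absorption, which is what actually fixes $\beta$. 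Without this weighting-and-dividing step (or a genuine substitute for it), your proof of \eqref{CL-I} does not go through.
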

\begin{proof}
\par Estimates \eqref{st1} and \eqref{NSM-II} are standard, so we omit the details.
 Multiplying \eqref{ap1} by $\lambda_rc_r^N(t)$ and summing over $r$, we obtain:
\be\label{ap2}
\frac 12 \frac{d}{dt}\dm \nabla v^N\dm_2^2-((\mu+|\cD v^N|^2)^\frac{p-2}2\cD v^N
, \cD (\Delta v^N))=(v^N\cdot\nabla v^N, \Delta v^n).
\ee
We integrate by parts on both sides, and use the following identity \eqref{oo} for the nonlinear operator:
\be\label{oo}\ba{ll}\dy
 \pa_{x_s}[(\mu+|\cD v^N|^2)^\frac{p-2}2\cD v^N]\cdot\pa_{x_s}\cD v^N \\ \dy =(\mu+|\cD v^N|^2)^\frac{p-2}2|\pa_{x_s}\cD v^N|^2+(p-2) (\mu+|\cD v^N|^2)^\frac{p-4}2(\cD v^N\cdot\pa_{x_s}\cD v^N)^2
\ea\ee we find:
\be\label{ap3}
\frac 12 \frac{d}{dt}\dm \nabla v^N\dm_2^2+(p-1)I_p(v^N)
\leq \|\nabla v^N\|_3^3,
\ee
where, in the last estimate, we have taken into account the periodicity of the functions and the identity $\int_\OO v^N_j\pa^2_{jk} v^N_i\pa_k v^N_i\, dx=0$.
Now we estimate the right-hand side. By the convexity inequality for Lebesgue spaces, there hold
\be\label{c1}
\|\n v^N\|_3\leq \|\n v^N\|_{3p}^{b}\|\n v^N\|_{p}^{1-b}, \ b=\frac{3-p}{2},\ee
and
\be\label{c2}\|\n v^N\|_3\leq \|\n v^N\|_{3p}^{c}\|\n v^N\|_{2}^{1-c}, \ c=\frac{p}{3p-2}.\ee
Therefore, writing for $\alpha\in (0,1)$, $$\|\n v^N\|_3^3=\|\n v^N\|_3^{3\alpha}\|\n v^N\|_3^{3(1-\alpha)},$$ using the previous inequalities in turn, we find
\be\label{n3}\|\n v^N\|_3^3\leq \|\n v^N\|_{p}^{3\alpha (1-b)}\|\n v^N\|_{3p}^{3\alpha b}
\|\n v^N\|_{2}^{3(1-\alpha)(1-c)}\|\n v^N\|_{3p}^{3(1-\alpha)c}.\ee
Combining  \eqref{ap3} and \eqref{n3}, we find
\be\label{ap4}
\frac 12 \frac{d}{dt}\dm \nabla v^N\dm_2^2+(p-1)I_p(v^N)
\leq \|\n v^N\|_{p}^{3\alpha (1-b)}(\|\n v^N\|_{2}^2)^{\frac 32(1-\alpha)(1-c)}\|\n v^N\|_{3p}^{3\alpha b+3(1-\alpha)c}.\ee
Using Lemma \ref{LSD},  
this implies
\begin{align}
&\frac 12 \frac{d}{dt}\dm \nabla v^N\dm_2^2+(p-1)I_p(v^N)
\leq  \dy c(\mu)\|\n v^N\|_{p}^{3\alpha (1-b)}(\|\n v^N\|_{2}^2)^{\frac 32(1-\alpha)(1-c)} \nonumber\\
&\dy+\|\n v^N\|_{p}^{3\alpha (1-b)}(\|\n v^N\|_{2}^2)^{\frac 32(1-
\alpha)(1-c)}I_p(v^N)^{\frac 3p[\alpha b+(1-\alpha)c]} =: A_1+A_2,\label{app1}
\end{align}
where $c(\mu)$ is a positive constant depending on $\mu$ that tends to zero as $\mu$ goes to zero. 
Let us estimate the terms on the right-hand side. On the second one we apply Young's inequality, and we find
$$A_2\leq \varepsilon I_p(v^N)+c(\varepsilon )\|\n v^N\|_{p}^{3\alpha (1-b)\delta'}(\|\n v^N\|_{2}^2)^{\frac 32(1-
\alpha)(1-c)\delta'}$$
provided that $\delta>1$ is such that
$$\frac 3p[\alpha b+(1-\alpha)c]\delta=1.$$
Further, also requiring that the exponent of $\|\n v^N\|_p$ is equal to $p$:
$$3\alpha (1-b)\delta'=p,$$
by algebraic computations, we find
that $\alpha=\frac {p(3p-5)}{6(p-1)}$,  which is admissible since $p>\frac 53$. 
Therefore, we end up with:
$$A_2\leq \varepsilon I_p(v^N)+c(\varepsilon )\|\n v^N\|_{p}^p(\|\n v^N\|_{2}^2)^{\frac{2(3-p)}{3p-5}}.$$
As far as $A_1$ is concerned, by easy algebraic manipulations, we can increase it as follows:
\begin{align*}
A_1 & \leq  c(\mu)\left(1+\|\n v^N\|_{p}^{3\alpha (1-b)\delta'}\right)\left(1+(\|\n v^N\|_{2}^2)^{\frac 32(1- \alpha)(1-c) \delta'} \right)\\
&=c(\mu)\left(1+\|\n v^N\|_{p}^p)\left(1+(\|\n v^N\|_{2}^2\right)^{\frac{2(3-p)}{3p-5}}\right).
\end{align*}
Finally, inserting the above estimates in \eqref{app1}, we find:
\be\label{app2}\ba{ll}\dy\vs1
\frac{d}{dt}\dm \nabla v^N\dm_2^2+c I_p(v^N)
\leq c(\mu)(1+\|\n v^N\|_{p}^p)(1+\|\n v^N\|_{2}^2)^{\lambda},\ea\ee
where we set $${\lambda}:=\frac{2(3-p)}{3p-5}.$$
Dividing by  $(1+\|\n v^N\|_{2}^2)^\lambda$ and integrating in $(0,T)$, since $\lambda>1$, we arrive at:
\be\label{app3}\ba{ll} \dy \vs1\frac{1}{\lambda-1}
\frac{1}{(1+\|\n v^N(0)\|_{2}^2)^{\lambda-1}}+c\int_0^T\frac{I_p(v^N)}{(1+\|\n v^N(\tau)\|_{2}^2)^\lambda} d\tau\\ \dy \leq \frac{1}{\lambda-1}\frac{1}{(1+\|\n v^N(T)\|_{2}^2)^{\lambda-1}}+c(\mu)\int_0^T(1+\|\n v^N(\tau)\|_{p}^p)d\tau\,,\ea
\ee
which, using \eqref{st1}, gives:
\be\label{app4}
\int_0^T\frac{I_p(v^N)}{(1+\|\n v^N(\tau)\|_{2}^2)^\lambda} d\tau \dy \leq C.\ee
Estimate \eqref{app4} is the starting point to get estimate \eqref{CL-I} for the second derivatives. By applying the reverse H\o lder's inequality and algebraic manipulations, we find from
$$I_p(v^N)\geq \|\n \cD v^N\|_p^2 \|(\mu+|\cD v^N|^2)^\frac 12\|_p^{p-2}\geq c\|\n \cD v^N\|_p^2 (\mu+\|\n v^N\|_p)^{p-2},$$
and \eqref{app4}, that
\be\label{app5}
\int_0^T\frac{\|D^2 v^N\|_p^2}{(\mu+\|\n v^N\|_p)^{2-p}(1+\|\n v^N(\tau)\|_{2}^2)^\lambda} d\tau \dy \leq C.\ee
Then, performing exactly the same calculations as in \cite{MNRR}, Chapter 5 (in order to get (3.60) from (3.59)), we arrive at \eqref{CL-I}. For completeness, we replicate the computation.
\\Define $\mathcal{K}(v^N):= \frac{\|D^2 v^N\|_p^2}{(\mu+\|\n v^N\|_p)^{2-p}(1+\|\n v^N(\tau)\|_{2}^2)^\lambda}$, using H\"older's inequality and \eqref{app5}, we obtain
\be \label{I}\ba{ll} \displaystyle \int_0^T \|D^2v^N\|_p^{2\beta}\, d\tau = \int_0^T \mathcal{K}(v^N)^\beta (\mu+\|\n v^N\|_p)^{(2-p)\beta} (1+ \| \nabla v^N \|_2^2)^{\lambda \beta}\, d\tau\\
 \leq  \displaystyle \left( \int_0^T \mathcal{K}(v^N)\, d\tau\right)^\beta \left(\int_0^T (\mu+\|\n v^N\|_p)^{\frac{(2-p)\beta}{1-\beta}} (1+ \|\nabla v^N\|_2^2)^{\frac{\lambda \beta}{1-\beta}}  \, d\tau \right)^{1-\beta} \\
 \leq  C  \displaystyle \left(\int_0^T (\mu+\|\n v^N\|_p)^{\frac{(2-p)\beta}{1-\beta}} (1+ \|\nabla v^N\|_2^2)^{\frac{\lambda \beta}{1-\beta}}  \, d\tau \right)^{1-\beta}\\
 \leq C \displaystyle \left(\int_0^T (\mu+\|\n v^N\|_p)^{\frac{(2-p)\beta}{1-\beta}}  d\tau \right)^{1-\beta} + C \displaystyle \left(\int_0^T (\mu+\|\n v^N\|_p)^{\frac{(2-p)\beta}{1-\beta}} ( \|\nabla v^N\|_2^2)^{\frac{\lambda \beta}{1-\beta}} d\tau \right)^{1-\beta}\\
 =: B_1^{1-\beta} + B_2^{1-\beta}. \ea \ee 
 Using the interpolation inequality, Sobolev embedding Theorem, and Lemma \ref{TINT} we find
 $$\| \nabla v^N\|_2 \leq  \| \nabla v^N\|_p^{\frac{5p-6}{2p}}  \| \nabla v^N\|_{\frac{3p}{3-p}}^{\frac{3(2-p)}{2p}} \leq  \| \nabla v^N\|_p^{\frac{5p-6}{2p}}  \| D^2 v^N\|_{p}^{\frac{3(2-p)}{2p}} , $$
 so 
 $$B_2 \leq C \int_0^T \left( \mu + \| \nabla v^N\|_p\right)^{ \left(2-p + \frac{5p -6}{p} \lambda \right)  \frac{\beta}{1-\beta}} \| D^2 u^N \|_p^{\frac{3(2-p)}{p} \frac{\lambda \beta}{1-\beta} } \, d\tau .$$
 Now, by  H\o lder's inequality
$$B_2 \leq C\left( \int_0^T (\mu+\|\nabla  v^N \|_p)^p\, d\tau\right)^{\frac{1}{\delta}} \left( \int_0^T \|D^2 v^N \|_p^{2\beta}\, d\tau\right)^{\frac{1}{\delta'}}, $$
where $\delta$ and $\delta'$ are chosen as follows
$$\frac{1}{\delta} := \left( \frac{2-p}{p} + \frac{5p-6}{p^2} \lambda \right) \frac{\beta}{1-\beta} \; \text{ and } \; \frac{1}{\delta'} := \frac{\lambda}{1-\beta} \frac{3(2-p)}{2p}.$$
In particular, $\beta$ is chosen in such a manner that $1=\frac{1}{\delta} + \frac{1}{\delta'}$, hence we fix $ \beta := \frac{(5p-9) p}{2(-p^2 + 8p -6)}$.
\\We remark that, since $\beta$ must  be positive,  the lower bound for the exponent $p> \frac{9}{5}$ is obtained.
\\In conclusion, since $\frac{(2-p)\beta}{1-\beta} \leq p$ and \eqref{st1}, $B_1$ is finite and we get
$$\int_0^T \|D^2 v^N\|_p^{2\beta} \, d\tau \leq C + \widetilde{C} \left( \int_0^T \|D^2 v^N \|_p^{2\beta} \, d\tau\right)^{\frac{1-\beta}{\delta}},$$
so the Young's inequality ensures \eqref{CL-I}.
\end{proof}
We end this section with one more estimate that will be useful for the study concerning the energy gap.

%

\begin{lemma}\label{zetadef}{\sl  Let $p\in (\frac 95,2)$ and let $v^N$ be  solutions to the Galerkin system \eqref{ap1}. For any $T>0$, there exists a constant $M>0$ such that \vskip-0.4cm
$$\intl0^t\frac{1}{(1+\|\nabla v^N\|_2^2)^{\zeta}}\bigg| \frac{d}{dt}\|\nabla v^N\|_2^2\bigg| d\tau\leq M(T), \ 
\mbox{ for all } N\in \N \mbox{ and }t>0\,,$$
with  $\zeta:=\frac{3(p-1)}{3p-5}$. }\end{lemma}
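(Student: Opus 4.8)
The plan is to reduce the claim to the scalar differential inequality already produced in the proof of Lemma~\ref{LJAM}. Set $y(\tau):=1+\|\n v^N(\tau)\|_2^2$, so that $y\geq 1$ and $y'=\frac{d}{d\tau}\|\n v^N\|_2^2$; then estimate \eqref{app2} can be rewritten as
$$y'+c\,I_p(v^N)\leq c(\mu)(1+\|\n v^N\|_p^p)\,y^\lambda,\qquad \lambda=\tfrac{2(3-p)}{3p-5}.$$
Since $I_p(v^N)\geq 0$, this at once bounds the positive part of the derivative, $(y')^+\leq c(\mu)(1+\|\n v^N\|_p^p)\,y^\lambda$. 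The whole argument rests on two elementary facts about the exponents: a direct computation gives $\zeta-\lambda=\frac{5p-9}{3p-5}>0$ and $\zeta-1=\frac{2}{3p-5}>0$ for $p\in(\frac 95,2)$, so that $\zeta>\lambda$ and $\zeta>1$.

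First I would estimate the positive part: dividing the bound for $(y')^+$ by $y^\zeta$ and using $y^{\lambda-\zeta}\leq 1$ (valid since $\lambda<\zeta$ and $y\geq 1$) gives
$$\int_0^t\frac{(y')^+}{y^\zeta}\,d\tau\leq c(\mu)\int_0^t(1+\|\n v^N\|_p^p)\,d\tau,$$
which is bounded by a constant $C(T)$, uniformly in $N$, by the a priori estimate \eqref{st1}. Next I would treat the signed integral by an exact integration: since $\zeta\neq 1$,
$$\int_0^t\frac{y'}{y^\zeta}\,d\tau=\frac{y(t)^{1-\zeta}-y(0)^{1-\zeta}}{1-\zeta},$$
and because $1-\zeta<0$ while $y\geq 1$ forces $y^{1-\zeta}\in(0,1]$, the right-hand side is bounded in absolute value by $\frac{1}{\zeta-1}=\frac{3p-5}{2}$, uniformly in $N$ and $t$.

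Finally I would recover the full integral from these two pieces. Splitting $|y'|=(y')^++(y')^-$ and using $(y')^-=(y')^+-y'$ yields
$$\int_0^t\frac{|y'|}{y^\zeta}\,d\tau=2\int_0^t\frac{(y')^+}{y^\zeta}\,d\tau-\int_0^t\frac{y'}{y^\zeta}\,d\tau\leq 2C(T)+\frac{3p-5}{2}=:M(T),$$
which is exactly the assertion. The only genuinely delicate point is the choice of the exponent $\zeta$: one must check simultaneously that $\zeta>\lambda$, so as to absorb the growth coming from the differential inequality into the factor $y^{\lambda-\zeta}\leq 1$, and that $\zeta>1$, so as to make the telescoping integral of $y'/y^\zeta$ bounded; it is precisely the value $\zeta=\frac{3(p-1)}{3p-5}$ that meets both requirements throughout the range $p\in(\frac 95,2)$.
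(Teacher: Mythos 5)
Your proof is correct, but it follows a genuinely different route from the paper's. The paper does not work with \eqref{app2} at all: it first re-runs the interpolation of $\|\nabla v^N\|_3^3$ to obtain the sharper one-sided inequality \eqref{app6}, whose right-hand side is $c(1+\|\nabla v^N\|_2^2)^{\zeta}$ with no $\|\nabla v^N\|_p^p$ factor; dividing by the weight and integrating yields the weighted dissipation bound \eqref{app8} on $\int_0^T I_p(v^N)(1+\|\nabla v^N\|_2^2)^{-\zeta}\,d\tau$. It then returns to the identity \eqref{ap2} and uses the structure \eqref{oo} of the elliptic term to get the \emph{two-sided} pointwise bound $\tfrac12\bigl|\tfrac{d}{d\tau}\|\nabla v^N\|_2^2\bigr|\le (3-p)I_p(v^N)+\|\nabla v^N\|_3^3$, after which the weighted integral of $|y'|$ is controlled by \eqref{app8}. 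You instead use only the one-sided inequality, the elementary identity $|y'|=2(y')^+-y'$, the exact antidifferentiation of $y'/y^{\zeta}$ (legitimate since the Galerkin coefficients are $C^1$ in time), and the exponent comparisons $\zeta>\lambda$, $\zeta>1$, which you verify correctly. Your argument is more elementary — it is purely a scalar ODE manipulation once \eqref{app2} is available, it never needs the coercivity/boundedness structure of the nonlinear elliptic operator beyond $I_p\ge 0$, and it makes transparent that any exponent exceeding $\max\{\lambda,1\}$ would do. What it does not deliver is the intermediate estimate \eqref{app8}, a weighted bound on the full dissipation $I_p(v^N)$, which the paper's route produces as a byproduct; that estimate is not needed elsewhere, so nothing is lost for the purposes of this lemma or of Theorem~\ref{mainT}.
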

\begin{proof}
Let us consider estimate \eqref{ap3}, which we reproduce here:
\be\label{ap5}
\frac 12 \frac{d}{dt}\dm \nabla v^N\dm_2^2+(p-1)I_p(v^N)
\leq \|\nabla v^N\|_3^3.
\ee
Applying the convexity inequality \eqref{c2} and estimate \eqref{SD2} in Lemma \ref{LSD}, we estimate the right-hand side as follows:
$$ \|\nabla v^N\|_3^3\leq 
 c\mu^{\frac{3}{2}c}(\|\n v^N\|_{2}^2)^{\frac 32(1-c)}
\hfill \dy+(\|\n v^N\|_{2}^2)^{\frac 32(1-c)}I_p(v^N)^{\frac 3pc},
$$
whence, by applying Young's inequality to the last term with exponent $\delta=\frac{p}{3c}$, we easily find:
\begin{align}
\dy \|\nabla v^N\|_3^3 &\leq c\mu^{\frac{3}{2}c}(\|\n v^N\|_{2}^2)^{\frac 32(1-c)}+c(\varepsilon)(\|\n v^N\|_{2}^2)^\frac {3(1-c)p}{2(p-3c)}+ \varepsilon I_p(v^N) \nonumber \\
& \dy \leq c\,(1+\|\n v^N\|_{2}^2)^{\frac {3(1-c)p}{2(p-3c)}}+
\varepsilon I_p(v^N) =
 c\,(1+\|\n v^N\|_{2}^2)^{\zeta}+
\varepsilon I_p(v^N), \label{app55}
\end{align}
since $\frac {3(1-c)p}{2(p-3c)}=\frac{3(p-1)}{3p-5}=:\zeta$. 
Hence, estimate \eqref{ap5} becomes:
\be\label{app6}\ba{ll}\dy\vs1
\frac{d}{dt}\dm \nabla v^N\dm_2^2+c I_p(v^N)
&\dy \leq  \dy c\,(1+\|\n v^N\|_{2}^2)^{\zeta}.
\ea\ee
Dividing by 
$(1+\|\n v^N\|_{2}^2)^\zeta$ and integrating in $(0,T)$, since $\zeta>1$, we arrive at
$$\dy \vs1\frac{1}{\zeta-1}
\frac{1}{(1+\|\n v^N(0)\|_{2}^2)^{\zeta-1}}+c\int_0^T\frac{I_p(v^N)}{(1+\|\n v^N(\tau)\|_{2}^2)^\zeta} d\tau\dy \leq \frac{1}{\zeta-1}\frac{1}{(1+\|\n v^N(T)\|_{2}^2)^{\zeta-1}}+cT,$$ which furnishes, in particular,
\be\label{app8}\int_0^T\frac{I_p(v^N)}{(1+\|\n v^N(\tau)\|_{2}^2)^\zeta} d\tau\dy \leq c+cT.\ee
On the other hand, identity \eqref{ap2}, which we reproduce here
$$
\frac 12 \frac{d}{dt}\dm \nabla v^N\dm_2^2=((\mu+|\cD v^N|^2)^\frac{p-2}2\cD v^N
, \cD (\Delta v^N))+(v^N\cdot\nabla v^N, \Delta v^n),$$
taking into account identity \eqref{oo}, 
ensures that:
\begin{align}
\frac 12 \bigg|\frac{d}{dt}\dm \nabla v^N\dm_2^2\bigg| &=\bigg|-(\pa_{x_s}[(\mu+|\cD v^N|^2)^\frac{p-2}2\cD v^N],\pa_{x_s}\cD v^N)+(v^N\cdot\nabla v^N, \Delta v^n)\bigg| \nonumber \\
&\dy\leq  (3-p)\,I_p(v^N)+\|\nabla v^N\|_3^3\leq c I_p(v^N)+c\,(1+\|\n v^N\|_{2}^2)^{\zeta}, \label{app9}
\end{align}
where, in the last step,  we have employed estimate \eqref{app55}. 
Dividing both sides by $(1+\|\n v^N\|_{2}^2)^{\zeta}$ and integrating in $(0,t)$, we find:
$$ \frac 12\int_0^t\frac{1}{(1+\|\n v^N(\tau)\|_{2}^2)^{\zeta}} \bigg|\frac{d}{d\tau}\dm \nabla v^N(\tau)\dm_2^2\bigg|d\tau \leq  c\int_0^t  \frac{I_p(v^N(\tau))}{(1+\|\n v^N(\tau)\|_{2}^2)^{\zeta}}d\tau+c\int_0^t d\tau.$$
By estimating the right-hand side with \eqref{app8}, we obtain the thesis. 
\end{proof}
\begin{prop}\label{existence}
{\sl Let $v_0 \in J^2_{per}(\Omega)$. Then the sequence $\{v^N\}$ of solutions to the Galerkin approximating system \eqref{ap1} converges, in a suitable topology, to a weak solution $v$ of \eqref{NS}-\eqref{NSb}}.
\end{prop}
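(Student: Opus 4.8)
The plan is to obtain $v$ as a weak limit of the Galerkin sequence $\{v^N\}$ and to verify the three items of Definition~\ref{WS}; the only genuinely delicate point is the passage to the limit in the nonlinear stress, for which the strong convergence of the gradients announced in the Introduction is the key tool. First I would collect the a priori bounds: by \eqref{st1} the family $\{v^N\}$ is bounded in $L^\infty(0,T;J^2_{per}(\OO))\cap L^p(0,T;J^{1,p}_{per}(\OO))$, and by \eqref{CL-I} together with Lemma~\ref{TINT} in $L^{2\beta}(0,T;W^{2,p}(\OO))$. Since $|(\mu+|\cD v^N|^2)^{\frac{p-2}2}\cD v^N|\le c(\mu^{\frac{p-1}2}+|\cD v^N|^{p-1})$ and $(p-1)p'=p$, the stress $S^N:=(\mu+|\cD v^N|^2)^{\frac{p-2}2}\cD v^N$ is bounded in $L^{p'}((0,T)\times\OO)$. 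Extracting a subsequence (not relabelled), I obtain limits $v$, $\ov S$ with $v^N\tow v$ weakly-$*$ in $L^\infty L^2$, weakly in $L^pJ^{1,p}$ and in $L^{2\beta}W^{2,p}$, and $S^N\tow\ov S$ weakly in $L^{p'}$; item~1) follows at once.

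Reading $v^N_t$ off \eqref{ap1}, its viscous part lies in $L^{p'}(0,T;(J^{1,p}_{per})^*)$ by the bound on $S^N$, and its convective part, written $\n\cdot(v^N\otimes v^N)$, in some $L^s(0,T;X^*)$ through the interpolation of $L^\infty L^2\cap L^pW^{1,p}$ with $W^{1,p}\hookrightarrow L^{3p/(3-p)}$; the Aubin--Lions lemma then gives $v^N\to v$ strongly in $L^p(0,T;L^2)$, which already suffices for the convective term. To upgrade this to strong convergence of $\n v^N$ in $L^q(0,T;J^{1,p}_{per}(\OO))$ for every $q<p$, I would split $(0,T)$ into $G_N(\Lambda)=\{\tau:\|D^2v^N(\tau)\|_p\le\Lambda\}$ and its complement: on $G_N(\Lambda)$ the uniform $W^{2,p}$ bound and the compact embedding $W^{2,p}\hookrightarrow\hookrightarrow W^{1,p}$ yield compactness, while on the complement Chebyshev applied to \eqref{CL-I} gives measure $\le C\Lambda^{-2\beta}$ and H\"older gives $\|\n v^N\|_{L^q(\,\cdot\,;L^p)}\le(C\Lambda^{-2\beta})^{\frac1q-\frac1p}\|\n v^N\|_{L^pW^{1,p}}$, small \emph{precisely because $q<p$}. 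Along a further subsequence then $\cD v^N\to\cD v$ a.e.\ in $(0,T)\times\OO$ and $\|\n v^N(\tau)\|_p\to\|\n v(\tau)\|_p$ for a.e.\ $\tau$.

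For the a.e.\ convergence of the $L^2$-norms of the gradients I would invoke Lemma~\ref{zetadef}: with $\Phi(s):=\int_0^s(1+\sigma)^{-\zeta}\,d\sigma$, which is increasing and bounded because $\zeta>1$, the lemma says exactly that $\tau\mapsto\Phi(\|\n v^N(\tau)\|_2^2)$ has total variation bounded uniformly in $N$, so Helly's selection theorem furnishes a subsequence along which $\|\n v^N(\tau)\|_2^2\to\ell(\tau)$ for a.e.\ $\tau$. To identify $\ell=\|\n v\|_2^2$, Fatou applied to \eqref{CL-I} makes $\liminf_N\|D^2v^N(\tau)\|_p$ finite for a.e.\ $\tau$; along a $\tau$-dependent subsequence realising this liminf the uniform $L^{3p/(3-p)}$ bound (exponent $>2$) interpolated against the a.e.\ $L^p$-convergence of $\n v^N$ gives strong $L^2$-convergence, forcing $\ell(\tau)=\|\n v(\tau)\|_2^2$. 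This is the mechanism behind the sets $J_N(\alpha)$ and the exponent $\gamma=\zeta-1$ of Theorem~\ref{mainT}, and it is what renders the set $\mathcal T$ of full measure.

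Finally, $\cD v^N\to\cD v$ a.e.\ and the continuity of $\xi\mapsto(\mu+|\xi|^2)^{\frac{p-2}2}\xi$ give $S^N\to(\mu+|\cD v|^2)^{\frac{p-2}2}\cD v$ a.e., so Vitali's theorem together with the uniform $L^{p'}$ bound identifies $\ov S=(\mu+|\cD v|^2)^{\frac{p-2}2}\cD v$, while the strong $L^p(0,T;L^2)$ convergence lets me pass to the limit in $\int_0^T(v^N\cdot\n\varphi,v^N)\,d\tau$. Testing \eqref{ap1} against $\varphi\in C_0^\infty([0,T);\mathcal V)$ (allowed once $N$ exceeds the modes entering $\varphi$, using the density of $\bigcup_N\mathrm{span}\{a^r\}$ in $\mathcal V$) and letting $N\to\infty$ yields item~2), and $v^N(0)\to v_0$ in $L^2$ with the weak time-continuity of $v$ gives item~3). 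The main obstacle is the strong convergence of the gradients: since the second-derivative bound \eqref{CL-I} carries only the small time-exponent $2\beta<1$, Aubin--Lions cannot be applied directly in $L^{2\beta}W^{2,p}$, and the argument must be localised to the good time-set while the error on its complement is absorbed through the strict inequality $q<p$; securing this strong convergence---rather than settling for a Minty--Browder monotonicity argument, which would deliver only the energy inequality---is exactly what makes the energy-gap analysis of Section~\ref{EG} possible.
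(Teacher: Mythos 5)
The paper's own ``proof'' of Proposition \ref{existence} is a one-line citation to \cite[Chapter 5, Theorem 3.4]{MNRR}; everything you reconstruct here is work the paper deliberately delegates, and the genuinely new convergence properties (strong convergence of $\n v^N$ in $L^q(0,T;L^p(\OO))$ for $q<p$, a.e.\ convergence of $\|\n v^N(\tau)\|_2$) are established separately in Proposition \ref{SCG} and are not actually needed to prove this proposition. Your overall architecture --- a priori bounds, the $L^{p'}$ bound on the stress via $(p-1)p'=p$, Aubin--Lions for the convective term, a.e.\ convergence of $\cD v^N$ plus Vitali to identify the limit stress --- is indeed the route followed in \cite{MNRR} for $p>\frac95$, so as a blueprint it is the right one.

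However, the step you yourself single out as the crux is not correct as written. First, ``weak convergence in $L^{2\beta}(0,T;W^{2,p})$'' is not meaningful: since $2\beta<1$ this is only a quasi-normed, non-locally-convex space, and \eqref{CL-I} cannot be upgraded to a weak compactness statement there. More seriously, the good-set/bad-set decomposition does not deliver strong convergence of the gradients: on $G_N(\Lambda)$ you have a uniform $W^{2,p}$ bound at each fixed time, but the compact embedding $W^{2,p}\hookrightarrow\hookrightarrow W^{1,p}$ concerns a single time slice; to conclude convergence in $L^q(G_N(\Lambda);W^{1,p})$ you would need some equicontinuity in time, which you do not establish, and in any case the sets $G_N(\Lambda)$ depend on $N$, so there is no common domain on which to run a compactness argument. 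The paper circumvents exactly this difficulty in Proposition \ref{SCG} by a different mechanism: the interpolation $\|\n u\|_p\le\|D^2u\|_p^{1/2}\|u\|_p^{1/2}$ (from Lemma \ref{TINT}) converts the bound \eqref{CL-I} together with the strong convergence of $v^N$ in $L^p(0,T;L^p)$ (obtained via Friedrichs's Lemma \ref{FR}) into a Cauchy property for $\{\n v^N\}$ in the quasi-normed space $L^\beta(0,T;L^p)$, and then interpolates against the uniform $L^p(0,T;L^p)$ bound to reach $L^q$ for every $q<p$; Lemma \ref{Bochner} then supplies the a.e.\ convergence of the norms. Your Helly-type argument from Lemma \ref{zetadef} (uniformly bounded variation of $\Phi(\|\n v^N\|_2^2)$ with $\Phi(s)=\int_0^s(1+\sigma)^{-\zeta}\,d\sigma$) is a genuinely different and attractive idea for the a.e.\ convergence of $\|\n v^N(\tau)\|_2$, but the identification of the pointwise limit with $\|\n v(\tau)\|_2^2$ through a ``$\tau$-dependent subsequence'' is only sketched, whereas the paper gets this identification for free from the completeness argument of Lemma \ref{Bochner}. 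For the proposition as stated, either replace the good-set decomposition by the interpolation route or simply cite \cite{MNRR}, as the paper does.
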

\begin{proof}
See \cite[Chapter 5, Theorem~3.4]{MNRR}.
\end{proof}
 \subsection{The strong convergence of gradients}
\par The aim in this Section is to achieve the convergence property of the approximating sequence, using the estimates obtained in Lemma \ref{LJAM}.\\ 
We shall use Bochner-like spaces with time summability strictly less than one. Namely, if $X$ is a Banach space, for $\sigma \in (0,1)$,  we define\footnote{In analogy with Definition 1.2.15 in \cite{ABS}.}  $L^\sigma\left(0,T;X\right)$ as the linear space of all (equivalence classes of) strongly $\mu$-measurable function $u: (0,T) \mapsto X$ for which $\int_0^T\|u(t)\|_X^\sigma\,dt<+\infty.$\\
We remark that in the case $\sigma<1$, the quantity $\left(\int_0^T\|u(t)\|_X^\sigma\,dt\right)^{\frac1\sigma}$ is merely a quasi-norm, but the above space is equipped with a metric for which the following completeness result holds true:
 \begin{lemma}\label{Bochner}
Let $X$ be a Banach space and $0<\sigma<1$. If $\lbrace u^n \rbrace_{n\in \N}$ is a sequence in $L^\sigma\left(0,T;X\right)$ obeying to the following Cauchy condition
$$\lim_{m,n\to\infty}\int\limits_0^T\|u^n(t)-u^m(t)\|_X^\sigma\,dt=0,$$
then there exists a subsequence $ \lbrace u^{n_j} \rbrace_{j\in \N}$  and a function $u\in L^\sigma\left(0,T;X\right)$ such that:
$$\lim_{n\to\infty}\int\limits_0^T\|u^n(t)-u(t)\|_X^\sigma\,dt=0,\qquad \lim_{j\to\infty}\|u^{n_j}(t)\|_X=\|u(t)\|_X,\ \mbox{for a.e. } t\in[0,T].$$
\end{lemma}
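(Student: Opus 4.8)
The plan is to reproduce the classical Riesz--Fischer completeness argument, the only structural difference being that for $\sigma\in(0,1)$ the quantity $\rho(u,v):=\int_0^T\|u(t)-v(t)\|_X^\sigma\,dt$ is a genuine metric rather than one induced by a norm. Everything rests on the elementary subadditivity inequality $(a+b)^\sigma\le a^\sigma+b^\sigma$, valid for $a,b\ge 0$ whenever $0<\sigma\le 1$; applied pointwise to $\|u(t)-w(t)\|_X\le\|u(t)-v(t)\|_X+\|v(t)-w(t)\|_X$ and then integrated, it yields the triangle inequality for $\rho$, so $\big(L^\sigma(0,T;X),\rho\big)$ is a metric space and the stated Cauchy hypothesis is exactly Cauchyness for $\rho$.

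First I would extract a rapidly converging subsequence $\{u^{n_j}\}$ with $\rho(u^{n_{j+1}},u^{n_j})\le 2^{-j}$ for all $j$, which is possible by the Cauchy condition. By the monotone convergence theorem, $\int_0^T\sum_{j\ge 1}\|u^{n_{j+1}}(t)-u^{n_j}(t)\|_X^\sigma\,dt=\sum_{j\ge 1}\rho(u^{n_{j+1}},u^{n_j})\le 1<\infty$, so the series $\sum_j\|u^{n_{j+1}}(t)-u^{n_j}(t)\|_X^\sigma$ converges for a.e. $t\in(0,T)$.

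The decisive step, and the one where $\sigma<1$ enters a second time, is to upgrade summability of the $\sigma$-powers to summability of the norms themselves. For a.e. fixed $t$ the general term $\|u^{n_{j+1}}(t)-u^{n_j}(t)\|_X^\sigma$ tends to $0$, hence $\|u^{n_{j+1}}(t)-u^{n_j}(t)\|_X\le 1$ for all large $j$, and for such $j$ one has $\|u^{n_{j+1}}(t)-u^{n_j}(t)\|_X\le\|u^{n_{j+1}}(t)-u^{n_j}(t)\|_X^\sigma$; consequently $\sum_j\|u^{n_{j+1}}(t)-u^{n_j}(t)\|_X<\infty$ for a.e. $t$. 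The telescoping series $u^{n_1}(t)+\sum_j\big(u^{n_{j+1}}(t)-u^{n_j}(t)\big)$ then converges absolutely in $X$, and since $X$ is complete it converges to some $u(t)\in X$; I set $u(t):=0$ on the exceptional null set. Being an a.e. pointwise limit in $X$ of strongly measurable maps, $u$ is strongly measurable, and the continuity of the norm immediately gives $\|u^{n_j}(t)\|_X\to\|u(t)\|_X$ for a.e. $t$, which is the second assertion.

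It remains to identify $u$ as the $L^\sigma$-limit. Applying Fatou's lemma to the pointwise a.e. convergence $\|u^{n_k}(t)-u^{n_j}(t)\|_X^\sigma\to\|u^{n_k}(t)-u(t)\|_X^\sigma$ gives $\rho(u^{n_k},u)\le\liminf_{j}\rho(u^{n_k},u^{n_j})$, which the Cauchy condition forces to $0$ as $k\to\infty$; moreover the subadditivity applied to $\|u\|_X\le\|u-u^{n_k}\|_X+\|u^{n_k}\|_X$ and integrated shows $\int_0^T\|u\|_X^\sigma\,dt\le\rho(u,u^{n_k})+\int_0^T\|u^{n_k}\|_X^\sigma\,dt<\infty$, so $u\in L^\sigma(0,T;X)$. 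Finally I would pass from the subsequence to the full sequence via the standard metric fact that a Cauchy sequence possessing a convergent subsequence converges to the same limit: given $\vep>0$, choose the tail index so that $\rho(u^n,u^m)<\vep$ and $\rho(u^{n_j},u)<\vep$ simultaneously, whence $\rho(u^n,u)\le\rho(u^n,u^{n_j})+\rho(u^{n_j},u)<2\vep$. I expect the only genuinely delicate point to be the $\sigma$-power-to-norm passage together with the clean bookkeeping of the null sets; everything else is the routine completeness machinery transported from a normed to a metric setting.
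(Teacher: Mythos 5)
Your proposal is correct and follows essentially the same route as the paper: extract a rapidly convergent subsequence, use monotone convergence to get a.e.\ summability of the $\sigma$-powers, upgrade to summability of the norms via $a\le a^\sigma$ for $a\le1$ and $\sigma<1$, and invoke completeness of $X$ on the telescoping series. The only (immaterial) divergence is at the final identification step, where you use Fatou's lemma and the metric triangle inequality for $\rho$, while the paper uses dominated convergence with the explicit $L^1$ majorant $\|u^{n_1}(t)\|_X^\sigma+\sum_j\|u^{n_j}(t)-u^{n_{j-1}}(t)\|_X^\sigma$.
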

\begin{proof} For the sake of completeness, we include the following proof.\\
By virtue of the Cauchy condition, we can find a strictly increasing sequence $\lbrace n_j\rbrace$ such that:
\be\label{Cauchy}\int\limits_0^T\|u^{n_j}(t)-u^n(t)\|_X^\sigma\,dt<2^{-j},\qquad\forall\,n>n_j.\ee
Applying the monotone convergence Theorem we get:
\be\label{monotone}\int\limits_0^T\sum_{j=2}^\infty\|u^{n_j}(t)-u^{n_{j-1}}(t)\|_X^\sigma\,dt=\sum_{j=2}^\infty\int\limits_0^T\|u^{n_j}(t)-u^{n_{j-1}}(t)\|_X^\sigma\,dt\le\sum_{j=2}^\infty2^{-(j-1)}<+\infty,\ee
hence the integrand on the left-hand side is finite for any $t\in[0,T]\setminus E$ with $|E|=0$.
For any fixed $t\not\in E$, the series $\sum\limits_{j=2}^\infty\|u^{n_j}(t)-u^{n_{j-1}}(t)\|_X^\sigma$ is convergent; hence $\|u^{n_j}(t)-u^{n_{j-1}}(t)\|_X^\sigma<1$, if $j$ is large enough. Since $0<\sigma<1$, we have  $\|u^{n_j}(t)-u^{n_{j-1}}(t)\|_X<\|u^{n_j}(t)-u^{n_{j-1}}(t)\|_X^\sigma$, and therefore $\sum\limits_{j=2}^\infty\|u^{n_j}(t)-u^{n_{j-1}}(t)\|_X<+\infty$. Since $X$ is a complete normed space, the series $\sum\limits_{j=2}^\infty\left(u^{n_j}(t)-u^{n_{j-1}}(t)\right)$ converges in $X$ to a function $w(t)$.
We set $u(t):=u^{n_1}(t)+w(t)$.
We have that:
$$u^{n_k}(t)=u^{n_1}(t)+\sum_{j=2}^k\left(u^{n_j}(t)-u^{n_{j-1}}(t)\right), \text{ for all }\,k\ge2,$$
and 
\be\label{pointwise}\lim_{k\to\infty}\|u(t)-u^{n_k}(t)\|_X=\lim_{k\to\infty}\left\|w(t)-\sum_{j=2}^k\left(u^{n_j}(t)-u^{n_{j-1}}(t)\right)\right\|_X=0,\quad \forall\,t\not\in E.\ee
We remark that
\be\label{L1bound}\ba{l}\displaystyle\|u^{n_k}(t)\|_X^\sigma\le\|u^{n_1}(t)\|_X^\sigma+\sum_{j=2}^k\|u^{n_j}(t)-u^{n_{j-1}}(t)\|_X^\sigma\\
\hfill\displaystyle\le\|u^{n_1}(t)\|_X^\sigma+\sum_{j=2}^\infty\|u^{n_j}(t)-u^{n_{j-1}}(t)\|_X^\sigma\ea\ee
and the function on the right-hand side belongs to $L^1(0,T)$ due to \eqref{monotone}. Now, we fix $n\in\N$ and observe that, by \eqref{pointwise},
$$\lim_{k\to\infty}\|u^{n_k}(t)-u^n(t)\|_X^\sigma=\|u(t)-u^n(t)\|_X^\sigma, \quad\text{ for all } t\not\in E.$$
Recalling the Cauchy condition for the sequence, for any $\varepsilon>0$ there exists $N(\varepsilon)$ such that:
$$\int\limits_0^T\|u^{n_k}(t)-u^n(t)\|_X^\sigma\,dt<\varepsilon,\qquad\text{ for all}\,n>N(\varepsilon), \ n_k>N(\varepsilon).$$
In virtue of the bound \eqref{L1bound}, we can apply the dominated convergence theorem to get:
$$\int\limits_0^T\|u(t)-u^n(t)\|_X^\sigma\,dt=\lim_{k\to\infty}\int\limits_0^T\|u^{n_k}(t)-u^n(t)\|_X^\sigma\,dt\le\varepsilon,\qquad\forall\,n>N(\varepsilon),$$
hence
$$\lim_{n\to\infty}\int\limits_0^T\|u(t)-u^n(t)\|_X^\sigma\,dt=0.$$
\end{proof}
\begin{prop}\label{SCG}{\sl Let $v_0$,  $v$ and $\lbrace v^N \rbrace_{N \in \N}$ as in Proposition \ref{existence}, and let $\beta$ given by \rf{CL-I}. Then, $$v^N\to v\ \mbox{ strongly in \ } L^q(0,T;J^{1,p}_{per}(\OO))\,,\mbox{ for all }q\in[1,p)\mbox{ and }T>0\,. $$
Moreover, $\nabla v\in L^\beta(0,T;L^2(\Omega))$ with
\be\label{PC}
\int_0^T\|\nabla v(t)\|_2^\beta dt\leq C(\|v_0\|_2),
\ee
and there exists a subsequence $\lbrace v^{N_j}\rbrace_{j \in \N}$  
such that  
\be\label{convaeL2}\dm \n v^{N_j}(t)\dm_2\to \|\nabla v (t)\|_2, \ \mbox{ a.e. in \ } (0,T).\ee
}\end{prop}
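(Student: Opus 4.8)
\textbf{Proof plan for Proposition \ref{SCG}.}

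The plan is to derive the strong convergence in $L^q(0,T;J^{1,p}_{per}(\OO))$ as a Cauchy argument built on the second-derivative estimate \eqref{CL-I} together with the completeness result of Lemma \ref{Bochner}, and then to extract the pointwise convergence \eqref{convaeL2} and the bound \eqref{PC} from this. First I would recall what is already available: from Proposition \ref{existence} the sequence $v^N$ converges to $v$ in the weak topologies associated with Definition \ref{WS}, in particular $\n v^N \tow \n v$ in $L^p((0,T)\times\OO)$, and from \eqref{st1} and \eqref{CL-I} we have uniform bounds on $\{v^N\}$ in $L^p(0,T;J^{1,p}_{per})\cap L^\infty(0,T;J^2_{per})$ and on $\{D^2 v^N\}$ in $L^{2\beta}(0,T;L^p(\OO))$. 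The Galerkin equation \eqref{ap1} also yields a uniform bound on the time derivatives $v^N_t$ in some dual space, so that an Aubin--Lions type compactness gives strong convergence $v^N\to v$ in $L^2((0,T)\times\OO)$ (and hence a.e. after passing to a subsequence).

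The key step is to upgrade this to strong convergence of the \emph{gradients}. The natural device is a monotonicity/interpolation argument: by Lemma \ref{TINT} and the interpolation inequality one controls $\|\n(v^N-v^M)\|_p$ by a product involving $\|v^N-v^M\|$ in a weak norm (which goes to zero by the $L^2$ compactness) and $\|D^2(v^N-v^M)\|_p$ (which is bounded uniformly by \eqref{CL-I}). Concretely, interpolating the first spatial derivative between a negative/low-order norm and the second-derivative norm gives, for a suitable exponent $\theta\in(0,1)$,
\begin{equation*}
\|\n(v^N-v^M)(t)\|_p \leq c\,\|(v^N-v^M)(t)\|_{low}^{1-\theta}\,\|D^2(v^N-v^M)(t)\|_p^{\theta}.
\end{equation*}
Raising to a power $\sigma<1$, integrating in time, and applying H\"older's inequality in $t$ with the $L^{2\beta}$ bound on the second derivatives shows that $\{v^N\}$ is Cauchy in $L^\sigma(0,T;J^{1,p}_{per})$; the restriction to quasi-norms $\sigma<1$ is exactly why Lemma \ref{Bochner} was proved. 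Then Lemma \ref{Bochner} provides a subsequence converging in $L^\sigma(0,T;J^{1,p}_{per})$ with $\|\n v^{N_j}(t)\|_p$ converging pointwise a.e., and since the full sequence is bounded in $L^p(0,T;J^{1,p}_{per})$, a uniform-integrability/Vitali argument promotes the convergence from $L^\sigma$ to $L^q$ for every $q\in[1,p)$.

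For the remaining claims, the bound \eqref{PC} follows by interpolating $\|\n v^N(t)\|_2$ between $\|\n v^N(t)\|_p$ and $\|D^2 v^N(t)\|_p$ (as already done inside the proof of Lemma \ref{LJAM}), raising to the power $\beta$, integrating, using H\"older and the estimates \eqref{st1}, \eqref{CL-I}; passing to the limit with Fatou's lemma along the a.e.-convergent subsequence transfers the uniform bound to $\n v$. Finally \eqref{convaeL2} is obtained by extracting, from the a.e.-in-$(0,T)$ convergence of $\n v^{N_j}$ (in $L^p$) guaranteed by Lemma \ref{Bochner}, a further subsequence along which $\|\n v^{N_j}(t)\|_2$ also converges a.e.; identifying the limit with $\|\n v(t)\|_2$ uses the a.e.\ pointwise identification of the limit function. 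The main obstacle I anticipate is the gradient Cauchy estimate: one must arrange the interpolation exponents so that the time-integrability coming from $2\beta$ is compatible with $\sigma<1$ and with the weak-norm decay, and verify that $2\beta$ is large enough for the H\"older step to close — this is where the precise value of $\beta$ from \eqref{CL-I}, and hence the threshold $p>\tfrac95$, enters decisively.
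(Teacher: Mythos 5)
Your overall strategy is the one the paper follows: interpolate the gradient between a low-order norm that converges strongly by compactness and the second-derivative norm controlled by \eqref{CL-I}, deduce a Cauchy condition in a quasi-normed Bochner space with time exponent below $1$, invoke Lemma~\ref{Bochner}, and then upgrade to $L^q(0,T;J^{1,p}_{per}(\Omega))$ for $q<p$ via the uniform $L^p$ bound (the paper does this last step with the convexity inequality in time rather than Vitali, and it obtains the basic $L^p(0,T;L^2(\Omega))$ compactness from Friedrichs' Lemma~\ref{FR} rather than Aubin--Lions, but these are cosmetic differences). Your treatment of \eqref{PC} by interpolation, \eqref{CL-I} and Fatou is also consistent with the paper.

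There is, however, one genuine gap: your justification of \eqref{convaeL2}. You propose to obtain the a.e. convergence of $\|\nabla v^{N_j}(t)\|_2$ by ``extracting a further subsequence'' from the a.e.-in-$t$ convergence of $\nabla v^{N_j}(t)$ in $L^p(\Omega)$. Since $p<2$, convergence in $L^p(\Omega)$ at a fixed time $t$ gives no control whatsoever on the $L^2(\Omega)$ norms: boundedness of $\|\nabla v^{N_j}(t)\|_2$ only yields weak $L^2$ subsequential limits and lower semicontinuity, not convergence of the norms, and this is precisely the delicate point on which the definition of the set $\mathcal T$ in Theorem~\ref{mainT} rests. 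The paper closes this by running a \emph{second, parallel} Cauchy argument directly in $L^\beta(0,T;L^2(\Omega))$ for the gradients, based on the inequality $\|\nabla u\|_2\leq c\,\|D^2u\|_p^{d}\,\|u\|_2^{1-d}$ with $d=\tfrac{2p}{7p-6}$ (a consequence of Lemma~\ref{TINT} and Sobolev embedding), raised to the power $\beta$, combined with H\"older in time, \eqref{CL-I} and the strong convergence of $v^N$ in $L^p(0,T;L^2(\Omega))$; Lemma~\ref{Bochner} applied with $X=L^2(\Omega)$ then delivers both the limit function (identified with $\nabla v$ through the $L^p$ convergence) and the a.e. convergence of the $L^2$ norms. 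You already have the right interpolation inequality in hand for \eqref{PC}, so the repair is within reach, but as written the step is unjustified. You should also actually verify the exponent bookkeeping you defer (that $\sigma=\beta<1$ and that the H\"older step closes with the $2\beta$ integrability), since this is exactly where the threshold $p>\tfrac95$ enters.
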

\begin{proof}
We recall that for $ u\in W^{2,p}(\OO)\cap J^{1,p}_{per}(\OO)$, Lemma \ref{TINT} yields
$$\dm \n u\dm_p\leq \dm D^2 u\dm_p^\frac12\dm u\dm_p^\frac12\,.$$
Hence, raising to the power $\beta$, with $\beta=\frac{p(5p-9)}{2(-p^2+8p-9)}$, 
then, integrating on $(0,T)$ and applying H\"older's inequality with exponent $2$, we get:
$$\intll0T\dm \n v^k(t)-\n v^N(t)\dm_p^\beta\,dt\leq \left[\intll0T\dm D^2 v^k(t)-D^2 v^N(t)\dm_p^\beta dt\right]^\frac12\left[\intll0T\dm v^k(t)-v^N(t)\dm_p^\beta\,dt\right]^\frac12\,.$$ 
By virtue of Lemma\,\ref{LJAM}, we know there exists a constant $C(\|v_0\|_2)$ such that:
\begin{align*}
 \dy \intll0T\dm \n v^k(t)-\n v^N(t)\dm_p^\beta\,dt &\leq (2C(\|v_0\|_2))^\frac12\left[\intll0T\dm v^k(t)-v^N(t)\dm_p^\beta\,dt\right]^\frac12\\
&\leq c(T)\,(2C(\|v_0\|_2))^\frac12\left[\intll0T\dm v^k(t)-v^N(t)\dm_p^p\,dt\right]^\frac{\beta}{2p},
\end{align*}
for all $ k,N\in\N,$ where in the last step we have used H\o lder's inequality.
Since, thanks to Lemma \,\ref{FR}, the strong convergence of $\{v^N\}$ in $L^p(0,T;L^2(\OO))$ holds, hence in $L^p(0,T;L^p(\OO))$, the above inequality ensures that:
\be\label{Cauchy1}
\lim_{k,N}\intll0T\dm \n v^k(t)-\n v^N(t)\dm_p^\beta\,dt=0.
\ee
Let $q\in [1,p)$. 
By using the convexity inequality for Lebesgue spaces $\|V\|_{L^\frac{q}{\beta}(0,T)}\leq 
\|V\|_{L^\frac{p}{\beta}(0,T)}^{\theta} \|V\|_{L^1(0,T)}^{1-\theta}$,
with $V:=\|\n v^k(t)-\n v^N(t)\|_p^\beta$, we find:
\be\label{Cauchy2}
\intll0T\dm \n v^k(t)-\n v^N(t)\dm_p^q\,dt\leq \left(\intll0T\dm \n v^k(t)-\n v^N(t)\dm_p^p\right)^\frac{q\theta}{p}\left(\intll0T\dm \n v^k(t)-\n v^N(t)\dm_p^\beta\,dt\right)^{\frac{q}{\beta}(1-\theta)}
\ee
 As, from  the energy inequality, $\{\nabla v^N\}$ is bounded in 
$L^p(0,T;L^p(\OO))$, uniformly with respect to $N\in\N$, estimate \eqref{Cauchy2} gives the Cauchy condition for $\{\n v^N\}$ in $L^q(0,T;L^{p}(\OO))$, for any $q\in [1,p)$, thanks to  \eqref{Cauchy1}. Therefore $\{v^N\}$ strongly converges to a function in $L^q\left(0,T;W^{1,p}(\Omega)\right)$. 
On the other hand, as $\{v^N\}$ weakly converges to $v$ in $L^p\left(0,T;W^{1,p}(\Omega)\right)$, $v$ must coincide with the strong limit in each space $L^q\left(0,T;W^{1,p}(\Omega)\right)$. This concludes the proof of the first strong convergence in the statement. \par
As far as the second convergence is concerned, Lemma \ref{TINT} yields
\be\label{gradL2byD2LpuL2}\dm \n u\dm_2\leq c\dm D^2 u\dm_p^d\dm u\dm_2^{1-d}\,,\ee
 for any $u\in W^{2,p}(\OO)\cap J^{1,p}_{per}(\OO), \ \ \mbox{ with}\ d=\frac{2p}{7p-6}\,.$
Hence, raising to the power $\beta$, with $\beta=\frac{p(5p-9)}{2(-p^2+8p-9)}$ given in \eqref{CL-I}, then, integrating on $(0,T)$ and applying H\"older's inequality with exponents $\frac 1d$ and $\frac{1}{1-d}$, we get: 
$$\intll0T\dm \n v^k(t)-\n v^N(t)\dm_2^\beta\,dt\leq \left[\intll0T\dm D^2 v^k(t)-D^2 v^N(t)\dm_p^\beta dt\right]^d\left[\intll0T\dm v^k(t)-v^N(t)\dm_2^\beta\,dt\right]^{1-d}\,.$$ 
By virtue of Lemma\,\ref{LJAM}, we know the existence of a constant $C(\|v_0\|_2)$ such that
\begin{align*}
\dy \intll0T\dm \n v^k(t)-\n v^N(t)\dm_2^\beta\,dt & \leq (2C(\|v_0\|_2))^d\left[\intll0T\dm v^k(t)-v^N(t)\dm_2^\beta\,dt\right]^{1-d}\\
&\dy \leq c\,(2C(\|v_0\|_2))^d\left[\intll0T\dm v^k(t)-v^N(t)\dm_2^p\,dt\right]^\frac{\beta(1-d)}{p}\,, 
\end{align*} for all  $k,N\in\N$. 
Since, thanks to Lemma \ref{FR}, the strong convergence of $\{v^N\}$ in $L^p(0,T;L^2(\OO))$ holds,  the above inequality ensures that:
\be\label{Cauchy3}
\lim_{k,N}\intll0T\dm \n v^k(t)-\n v^N(t)\dm_2^\beta\,dt=0.
\ee
Applying Lemma \ref{Bochner} with $X=L^2(\Omega)$, we get that there exists $\psi\in L^\beta(0,T;L^2(\Omega))$ such that:
$$\lim_{N\to \infty}\intll0T\dm \psi(t)-\n v^N(t)\dm_2^\beta\,dt=0$$
hence
$$\lim_{N\to \infty}\intll0T\dm \psi(t)-\n v^N(t)\dm_p^\beta\,dt=0.$$
By the strong convergence in $L^q(0,T;J^{1,p}_{per}(\Omega))$, we have
$$\lim_{N\to \infty}\intll0T\dm \nabla v(t)-\n v^N(t)\dm_p^\beta\,dt=0$$ 
hence $\|\nabla v(t)-\psi(t)\|_p=0$ and $\psi(t)=\nabla v(t)$ for almost every $t\in[0,T]$.
The convergence \eqref{convaeL2} is also a consequence of Lemma \ref{Bochner}. Estimate \eqref{PC} follows by estimates \eqref{NSM-II}, \eqref{CL-I} and inequality \eqref{gradL2byD2LpuL2}.
\end{proof}

\section{The energy gap \label{EG}}
As demonstrated in the previous Section, the approximating sequence does not strongly converge in $L^p(0,T;J_{per}^{1,p}(\O))$, but only weakly,  to the solution  and consequently satisfies the energy \emph{inequality}. In this context, we aim to estimate and provide an explicit expression for the gap in this inequality. To this end, in the present Section, we introduce a weight function whose properties are studied in Lemma \ref{convergenza}. Finally, we prove the main Theorem, in which we obtain two equivalent expressions for the gap.\par
Let us introduce some notation. For any $\tau\in[0,T]$, we set 
$$\rho_N(\tau)=\|\nabla v^N(\tau)\|_2^2, \quad \wt\rho_N(\tau)=\int_\O(\mu+|\cD v^N|^2)^{\frac{p-2}2}|\cD v^N|^2\,dx,$$
$$\rho(\tau)=\|\nabla v(\tau)\|_2^2, \quad
\wt\rho(\tau)=\int\limits_\O(\mu+|\cD v(\tau)|^2)^{\frac{p-2}2}|\cD v(\tau)|^2\,dx.$$
Using the above notation, the energy equality for the approximating functions $v^N$ becomes
\be\label{EEm}\frac d{d\tau}\|v^N(\tau)\|_2^2+2\wt\rho_N(\tau)=0.\ee
We define the function $P:[0,\frac\pi2)\times[0,+\infty)\longrightarrow\R$ as follows
$$P(\a,\rho)=\left\{\begin{array}{ll}1&\mbox{if }0\le\rho^\g\le\tan\a,\\
\dy\frac{\pi-2\arctan(\rho^\g)}{\pi-2\a}&\mbox{if }\rho^\g>\tan\a.
\end{array}\right.$$
Moreover, let us consider
$${\mathcal T}:=\left\{\tau\in(0,T): \|\nabla v^N(\tau)\|_p\rightarrow\|\nabla v(\tau)\|_p,\|\nabla v^N(\tau)\|_2\rightarrow \| \nabla v (\tau) \|_2\right\}.$$
We recall from Proposition \ref{SCG} that, for a suitable subsequence (not relabeled), the set $\mathcal T$ has full measure in $(0,T)$.	
Now, let us fix two instants $s,t\in{\mathcal T}$ with $s<t$. We can find a real number $\ov\a\in(0,\frac\pi2)$ such that
$$\max\left\{\|\nabla v(s)\|_2^{2\g},\|\nabla v(t)\|_2^{2\g}\right\}<\tan\ov\a,$$
and an integer $\ov m$ such that
\be\label{maxnablavm}\max\left\{\|\nabla v^N(s)\|_2^{2\g},\|\nabla v^N(t)\|_2^{2\g}\right\}<\tan\a, .\ee
for all  $N\ge\ov m$  and $\a\ge\ov\a$.\\
From now on, we focus on the interval $[s,t]$. We set, for any $N\ge\ov m$, and $\a\ge\ov\a$:
\be\label{Jmalpha}
J_N(\a)=\left\{\tau\in[s,t]:\rho_N^\g(\tau)>\tan\a\right\}.
\ee
We recall that, by Lemma \ref{LJAM}, $\rho_N$ is a continuous function, hence if \break
$\displaystyle\max_{\tau\in[s,t]}\left\{\rho_N^\g(\tau)\right\}\le\tan\a$, then $J_N(\a)$ is empty; otherwise it is a non empty open set. Therefore, we can find two sequences (eventually finite) of numbers $\{s_h(N,\a)\}$ and $\{t_h(N,\a)\}$  such that the intervals $\left(s_h(N,\a),t_h(N,\a)\right)$ are mutually disjoint and 
$$J_N(\a)=\bigcup_h\left(s_h(N,\a),t_h(N,\a)\right).$$
If no confusion arises, we will omit the dependence on $N$ and $\a$ of the intervals, simply writing $(s_h,t_h)$. Another consequence of the continuity of $\rho_N$ is that 
\be\label{rhomth}\rho_N^\g(s_h)=\rho_N^\g(t_h)=\tan\a, \quad\text{for all}\,h \in \N.\ee
Now, we set
$$E_N(\a)=(s,t)\setminus \ov{J_N(\a)}.$$
\begin{lemma}(Weight function's properties)\label{convergenza}
Let consider $\rho_N(\tau)$, $\wt\rho_N(\tau) $, $\overline{\alpha}$, and $\mathcal T$ as defined above. Then, for all $\alpha>\overline{\alpha}$,
\be \label{convP}\lim_{N\to \infty} \left(\|v^N(t)\|_2^2\,P(\a,\rho_N(t))-\|v^N(s)\|_2^2\,P(\a,\rho_N(s)) \right) =\|v(t)\|_2^2-\|v(s)\|_2^2,\ee
and
\be \label{convN}\lim_{N\to \infty}\int\limits_s^t\wt\rho_N(\tau)P(\a,\rho_N(\tau))\,d\tau=\int\limits_s^t\wt\rho(\tau)P(\a,\rho(\tau))\,d\tau.\ee
Moreover,
\be \label{convA} \lim_{\a\to\frac\pi2^-}\int\limits_s^t\wt\rho(\tau)P(\a,\rho(\tau))\,d\tau=\int\limits_s^t\wt\rho(\tau)\,d\tau.\ee
\end{lemma}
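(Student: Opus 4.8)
The plan is to treat the three identities separately, in each case reducing to a dominated- or bounded-convergence argument on the fixed finite interval $(s,t)$. Throughout I rely on two consequences of Proposition~\ref{SCG}: along the chosen (not relabelled) subsequence one has, for a.e.\ $\tau$, the strong convergence $\cD v^N(\tau)\to\cD v(\tau)$ in $L^p(\OO)$ (coming from the strong convergence in $L^q(0,T;J^{1,p}_{per}(\OO))$, $q\in[1,p)$), hence $\rho_N(\tau)\to\rho(\tau)$; and, by the strong convergence of $\{v^N\}$ in $L^p(0,T;L^2(\OO))$ (Friedrichs' Lemma~\ref{FR}), also $\|v^N(\tau)\|_2\to\|v(\tau)\|_2$ for a.e.\ $\tau$. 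I take $s,t$ in the still full-measure set on which all these pointwise convergences hold, which only shrinks $\mathcal T$ by a null set.

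For \eqref{convP} there is essentially nothing to estimate. By \eqref{maxnablavm}, for every $N\ge\ov m$ and $\a\ge\ov\a$ one has $\rho_N^\g(s)<\tan\a$ and $\rho_N^\g(t)<\tan\a$, so by the definition of $P$ we get $P(\a,\rho_N(s))=P(\a,\rho_N(t))=1$, and likewise $P(\a,\rho(s))=P(\a,\rho(t))=1$. Hence the left-hand side of \eqref{convP} equals $\|v^N(t)\|_2^2-\|v^N(s)\|_2^2$, and the claim follows at once from $\|v^N(s)\|_2\to\|v(s)\|_2$ and $\|v^N(t)\|_2\to\|v(t)\|_2$.

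Identity \eqref{convN} is the heart of the lemma. First I would establish the pointwise (in $\tau$) convergence of the integrand: continuity of the Nemytskii operator $A\mapsto(\mu+|A|^2)^{\frac{p-2}4}A$ from $L^p(\OO)$ into $L^2(\OO)$ (its growth exponent is $p/2$, so this is standard) turns the a.e.\ convergence $\cD v^N(\tau)\to\cD v(\tau)$ in $L^p$ into $\wt\rho_N(\tau)=\|(\mu+|\cD v^N|^2)^{\frac{p-2}4}\cD v^N(\tau)\|_2^2\to\wt\rho(\tau)$ for a.e.\ $\tau$; together with $\rho_N(\tau)\to\rho(\tau)$ and the continuity of $P(\a,\cdot)$ this yields $\wt\rho_N(\tau)P(\a,\rho_N(\tau))\to\wt\rho(\tau)P(\a,\rho(\tau))$ a.e. The decisive step is a bound on the integrand uniform in $N$ and $\tau$, after which the bounded convergence theorem on $(s,t)$ closes the argument. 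Here I exploit the tail decay built into $P$: from $\frac\pi2-\arctan x=\arctan(1/x)\le 1/x$ one gets $P(\a,\rho)\le\frac{2}{(\pi-2\a)\rho^\g}$ whenever $\rho^\g>\tan\a$, while $P\le 1$ otherwise. Combining this with the crude bound
$$\wt\rho_N\le\int_\OO(\mu+|\cD v^N|^2)^{p/2}\,dx\le c\big(\mu^{p/2}+\|\n v^N\|_p^p\big)\le c\big(\mu^{p/2}+\rho_N^{p/2}\big),$$
where the last inequality uses $\|\n v^N\|_p\le c\|\n v^N\|_2$ on the bounded domain $\OO$, gives
$$\wt\rho_N\,P(\a,\rho_N)\le c(\a,\mu,p)\qquad\text{on }(s,t),\ \text{for all }N,$$
because $\g=\zeta-1=\tfrac{2}{3p-5}>\tfrac p2$ for $p\in(\tfrac95,2)$, so that $\rho_N^{\,p/2-\g}$ stays bounded on the set $\{\rho_N^\g>\tan\a\}$, where $\rho_N$ is bounded below. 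This is the step I expect to be the most delicate, since it is exactly where the choice of $\g$ (through the exponent $\zeta$ of Lemma~\ref{zetadef}) and the tail behaviour of the weight $P$ must cooperate.

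Finally, \eqref{convA} follows by letting $\a\to\frac\pi2^-$. For fixed $\tau$ with $\rho(\tau)<\infty$ (which holds a.e.\ since $\n v\in L^\beta(0,T;L^2(\OO))$ by \eqref{PC}) one has $\tan\a>\rho^\g(\tau)$ once $\a$ is close enough to $\frac\pi2$, whence $P(\a,\rho(\tau))=1$; moreover $\a\mapsto P(\a,\rho)$ is non-decreasing and bounded by $1$. Since $0\le\wt\rho(\tau)P(\a,\rho(\tau))\le\wt\rho(\tau)$ and $\wt\rho\in L^1(s,t)$ (again from $\wt\rho\le c(\mu^{p/2}+\|\n v\|_p^p)$ together with $v\in L^p(0,T;J^{1,p}_{per}(\OO))$), the monotone (equivalently dominated) convergence theorem yields $\int_s^t\wt\rho\,P(\a,\rho)\,d\tau\to\int_s^t\wt\rho\,d\tau$, which is \eqref{convA}.
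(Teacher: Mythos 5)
Your proof is correct, and for \eqref{convP} and \eqref{convA} it coincides with the paper's (endpoints satisfy $P=1$ by \eqref{maxnablavm} plus convergence of the $L^2$-norms at $s,t$; dominated convergence with majorant $\wt\rho\in L^1(s,t)$). For the central claim \eqref{convN}, however, you take a genuinely different and more economical route. The paper first splits $\int_s^t\wt\rho_N P(\a,\rho_N)=\int_s^t(\wt\rho_N-\wt\rho)P(\a,\rho_N)+\int_s^t\wt\rho\,P(\a,\rho_N)$, handles the second piece by dominated convergence, and for the first introduces an auxiliary angle $\eta\in(\a,\tfrac\pi2)$, decomposes $(s,t)$ into $E_N(\eta)\cup J_N(\eta)$, uses the bound $\wt\rho_N\le c\rho_N^{p/2}\le(\tan\eta)^{p/(2\g)}$ on $E_N(\eta)$, the smallness $P(\a,\rho_N)\le P(\a,(\tan\eta)^{1/\g})$ together with the energy bound \eqref{st1} on $J_N(\eta)$, and finally sends $\eta\to\tfrac\pi2^-$. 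You instead observe the single uniform bound $\wt\rho_N\,P(\a,\rho_N)\le C(\a,\mu,p)$, obtained by playing the tail decay $P(\a,\rho)\le\frac{2}{(\pi-2\a)\rho^\g}$ against $\wt\rho_N\le c\,\rho_N^{p/2}$ and the inequality $\g=\frac{2}{3p-5}>\frac p2$ on $(\tfrac95,2)$, and then conclude by bounded convergence on the finite interval; this eliminates the auxiliary parameter $\eta$ entirely and makes explicit why the exponent $\g$ must dominate $p/2$, a point the paper leaves implicit. The two proofs also differ in how they get the pointwise convergence $\wt\rho_N(\tau)\to\wt\rho(\tau)$: the paper uses the algebraic inequality of \cite{DEbR} plus H\"older, you invoke continuity of the Nemytskii map $A\mapsto(\mu+|A|^2)^{\frac{p-2}{4}}A$ from $L^p(\OO)$ to $L^2(\OO)$; both are valid, and both require (as you correctly note, and as the paper uses tacitly) the a.e.\ strong $L^p(\OO)$-convergence of $\cD v^N(\tau)$ along the chosen subsequence, not merely the norm convergences defining $\mathcal T$.
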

\begin{proof}
Concerning the first property,  we have \eqref{maxnablavm}, observing that $s,t\in{\mathcal T}$, hence $\|v^N(s)\|_2\rightarrow\|v(s)\|_2$,\  $\|v^N(t)\|_2\rightarrow\|v(t)\|_2$ and the continuity of $P$.\\
We start proving \eqref{convN} from the following decomposition:
\begin{align}
& \dy\int\limits_s^t\wt\rho_N(\tau)\,  P(\a,\rho_N(\tau))\,d\tau \nonumber\\
&\dy=\int\limits_s^t\left(\wt\rho_N(\tau)-\wt\rho(\tau)\right)P(\a,\rho_N(\tau))\,d\tau+\int\limits_s^t\wt\rho(\tau)P(\a,\rho_N(\tau))\,d\tau. \label{Lpgradient}
\end{align}
Regarding the last integral in \eqref{Lpgradient}, we observe that:
$$\left|\wt\rho(\tau)P(\a,\rho_N(\tau))\right|\le\wt\rho(\tau)\in L^1(0,T),$$
$$\lim_{N\to \infty} P(\a,\rho_N(\tau))=P(\a,\rho(\tau)), \text{ for all }\tau\in{\mathcal T},$$
hence, by the dominated convergence Theorem:
\be\label{rhotilde}\lim_{N\to \infty}\int\limits_s^t\wt\rho(\tau)P(\a,\rho_N(\tau))\,d\tau=\int\limits_s^t\wt\rho(\tau)P(\a,\rho(\tau))\,d\tau.\ee
To evaluate the first integral on the right-hand side of \eqref{Lpgradient}, we first prove that 
\be\label{limrhotildeN}\lim_{N\to \infty}\wt\rho_N(\tau)=\wt\rho(\tau),\qquad\text{ for all }\,\tau\in{\mathcal T}.\ee
For this purpose, we recall that (see \cite[Lemma 6.3]{DEbR})
\begin{align*}
&\dy\left|\left(\mu+|\cD v^N|^2\right)^{\frac{p-2}2}\cD v^N-\left(\mu+|\cD v|^2\right)^{\frac{p-2}2}\cD v\right|\\
&\le c\frac{|\cD v^N-\cD v|}{\left(\mu+|\cD v^N|+|\cD v|\right)^{2-p}}
\le c|\cD v^N-\cD v|\,|\cD v|^{p-2},
\end{align*}
hence, applying H\"{o}lder's inequality too, we find:
 \begin{align*}
& \left|\wt\rho_N(\tau)-\wt\rho(\tau)\right| \\
&=\left|\int\limits_\O\left(\mu+|\cD v^N|^2\right)^{\frac{p-2}2}\cD v^N\left(\cD v^N-\cD v\right)\,dx\right.\\
&+\left.\int\limits_\O\left(\left(\mu+|\cD v^N|^2\right)^{\frac{p-2}2}\cD v^N-\left(\mu+|\cD v|^2\right)^{\frac{p-2}2}\cD v\right)\cD v\,dx\right|\\
&\le\int\limits_\O|\cD v^N-\cD v|\,|\cD v^N|^{p-1}\,dx+c\int\limits_\O|\cD v^N-\cD v|\,|\cD v|^{p-1}\,dx\\
&\le\|\cD v^N(\tau)-\cD v(\tau)\|_p\left(\|\cD v^N(\tau)\|_p^{p-1}+c\|\cD v(\tau)\|_p^{p-1}\right).
 \end{align*}
Due to the strong convergence of $\nabla v^N(\tau)$ to $\nabla v(\tau)$ in $L^p((0,T))$ for any $\tau\in{\mathcal T}$, the claim is proven.\\
Now, returning to the first integral on the right-hand side of \eqref{Lpgradient}, for any fixed $\eta\in(\a,\frac\pi2)$, we have:
\begin{align}
&\int\limits_s^t\left(\wt\rho_N(\tau)-\wt\rho(\tau)\right)  P(\a,\rho_N(\tau)) \,d\tau \nonumber \\
\dy &=\int\limits_s^t\chi_{E_N(\eta)}(\tau)\left(\wt\rho_N(\tau)-\wt\rho(\tau)\right)P(\a,\rho_N(\tau))\,d\tau \nonumber\\
\dy& \; \; \; +\int\limits_s^t\chi_{J_N(\eta)}(\tau)\left(\wt\rho_N(\tau)-\wt\rho(\tau)\right)P(\a,\rho_N(\tau))\,d\tau. \label{rhom-rho}
\end{align}
If $\tau\in E_N(\eta)$, then 
$$\wt\rho_N(\tau)\le\|\cD v^N(\tau)\|_p^p\le c\rho_N(\tau)^{\frac p2}\le(\tan\eta)^{\frac p{2\g}}.$$
Hence
$$\left|\chi_{E_N(\eta)}(\tau)\left(\wt\rho_N(\tau)-\wt\rho(\tau)\right)P(\a,\rho_N(\tau))\right|\le (\tan\eta)^{\frac p{2\g}}+\wt\rho(\tau)\in L^1((s,t)),$$
and, recalling \eqref{limrhotildeN} and using the dominated convergence theorem, we find
\be\label{Embeta}\lim_{N\to \infty} \int\limits_s^t\chi_{E_N(\eta)}(\tau)\left(\wt\rho_N(\tau)-\wt\rho(\tau)\right)P(\a,\rho_N(\tau))\,d\tau=0,\text{ for all } \a<\eta<\frac\pi2.\ee
If $\tau\in J_N(\eta)$, since $P(\a,\cdot)$ is a decreasing function, then, the energy identity \eqref{EEm} implies:
\begin{align}
&\dy\int\limits_s^t \left|\chi_{J_N(\eta)}(\tau)\left(\wt\rho_N(\tau)-\wt\rho(\tau)\right)P(\a,\rho_N(\tau))\right|\,  d\tau \nonumber\\
& \dy\le P(\a,(\tan\eta)^{\frac1\g})\int\limits_s^t\left(\wt\rho_N(\tau)+\wt\rho(\tau)\right)\, d\tau\le cP(\a,(\tan\eta)^{\frac1\g}),\label{Jmbeta}
\end{align}
where we have used \eqref{st1}.
Using \eqref{Embeta} and \eqref{Jmbeta} in \eqref{rhom-rho} we get
$$0\le\limsup_{N\to \infty}\int\limits_s^t\left|\wt\rho_N(\tau)-\wt\rho(\tau)\right|P(\a,\rho_N(\tau))\,d\tau\le cP(\a,(\tan\eta)^{\frac1\g}).$$
Passing to the limit as $\eta\to\frac\pi2^-$, since $\lim\limits_{\rho\to+\infty}P(\a,\rho)=0$, we have that
$$\limsup_{N \to \infty} \left|\int\limits_s^t\left(\wt\rho_N(\tau)-\wt\rho(\tau)\right)P(\a,\rho_N(\tau))\,d\tau\right|=0,\quad\text{ for all }\,\a\ge\ov\a.$$
Using this result, together with \eqref{rhotilde}, in \eqref{Lpgradient}, we get \eqref{convN}.\\
To prove \eqref{convA}, we observe that
$$0\le\wt\rho(\tau)P(\a,\rho(\tau))\le\wt\rho(\tau)\in L^1\left((s,t)\right),$$
hence, since $\lim\limits_{\a\to\frac\pi2^-}P(\a,\rho(\tau))=1$ for any $\tau\in(s,t)$, by the dominated convergence Theorem, the claim is proven.
\end{proof}
We are now ready to prove the main Theorem.
\begin{proof}[Proof of Theorem \ref{mainT}]
For reader's convenience, we rewrite the energy equality \ref{EEm} for the approximating functions $v^N$ 
$$\frac d{d\tau}\|v^N(\tau)\|_2^2+2\wt\rho_N(\tau)=0.$$
Moreover, since $E_N(\a)$ is open and $P(\a,\rho_N(\tau))=1$ for any $\tau\in E_N(\a)$, we get that 
$$\frac d{d\tau}P(\a,\rho_N(\tau))=0,\ \text{ for all }\,\tau\in E_N(\a).$$
On the other side, if $\tau\in J_N(\a)$ we have
$$\frac d{d\tau}P(\a,\rho_N(\tau))=\frac{-2}{\pi-2\a}\frac1{1+\rho_N^{2\g}(\tau)}\frac d{d\tau}(\rho_N^\g)(\tau).$$
Note that $(s,t)\setminus\left(E_N(\a)\cup J_N(\a)\right)$ is a negligible set.\\
We consider the energy identity \eqref{EEm} weighted with $P(\a,\rho_N)$, namely
$$\frac d{d\tau}\|v^N(\tau)\|_2^2\,P(\a,\rho_N(\tau))+2\wt\rho_N(\tau) \,P(\a,\rho_N(\tau))=0.$$
We integrate by parts on the interval $(s,t)$, obtaining
\begin{align}
&\|v^N(t)  \|_2^2\,P(\a,\rho_N(t)) -  \|v^N(s)\|_2^2\,  P(\a,\rho_N(s)) \nonumber\\
&\dy+\frac{2}{\pi-2\a}\int\limits_{J_N(\a)}\|v^N(\tau)\|_2^2\frac1{1+\rho_N^{2\g}(\tau)}\frac d{d\tau}(\rho_N^\g)(\tau)\,d\tau +2\int\limits_s^t\wt\rho_N(\tau)\,P(\a,\rho_N(\tau))\,d\tau=0.\label{WEEm}
\end{align}
Passing to the limit as $N\to\infty$ in \eqref{WEEm}, using \eqref{convP} and  \eqref{convN}, we get
\begin{align}
& \dy\frac{2}{\pi-2\a}\lim_{N\to \infty}\int\limits_{J_N(\a)}\frac{\|v^N(\tau)\|_2^2}{1+\rho_N^{2\g}(\tau)}  \frac d{d\tau}(\rho_N^\g)(\tau)\,  d\tau \nonumber \\
&\dy=\|v(s)\|_2^2-\|v(t)\|_2^2-2\int\limits_s^t\wt\rho(\tau)P(\a,\rho(\tau))\,d\tau.\label{firstlimitm}
\end{align}
Let us integrate by parts the integral on the left-hand side, recalling that $J_N(\a)= \displaystyle \bigcup_h\left(s_h(N,\a),t_h(N,\a)\right)$
\begin{align}&\int\limits_{J_N(\a)}\frac{\|v^N(\tau)\|_2^2}{1+\rho_N^{2\g}(\tau)}  \frac d{d\tau}(\rho_N^\g)(\tau)  \,d\tau \nonumber \\
&\dy=\sum_h\left(\frac{\|v^N(t_h)\|_2^2}{1+\rho_N^{2\g}(t_h)}\rho_N^\g(t_h)-\frac{\|v^N(s_h)\|_2^2}{1+\rho_N^{2\g}(s_h)}\rho_N^\g(s_h)\right)   - \!\!\int\limits_{J_N(\a)}\frac d{d\tau}\left(\frac{\|v^N(\tau)\|_2^2}{1+\rho_N^{2\g}(\tau)}\right)\rho_N^\g(\tau)\,d\tau. 
\label{Mstm}
\end{align}
Recalling \eqref{rhomth}, the sum on the right-hand side can be rewritten as
$$\frac{\tan\a}{1+\tan^2\a}\sum_h\left(\|v^N(t_h)\|_2^2-\|v^N(s_h)\|_2^2\right).$$
Concerning the integral on the right-hand side of \eqref{Mstm}, we compute the derivative and we use the energy identity \eqref{EEm} to get
\be\label{derivativeexpanded}\ba{l}\dy\int\limits_{J_N(\a)}\frac d{d\tau}\left(\frac{\|v^N(\tau)\|_2^2}{1+\rho_N^{2\g}(\tau)}\right)\rho_N^\g(\tau)\,d\tau
=-2\int\limits_{J_N(\a)}\wt\rho_N(\tau)\frac{\rho_N^\g(\tau)}{1+\rho_N^{2\g}(\tau)}\,d\tau\\
\dy-2\int\limits_{J_N(\a)}\|v^N(\tau)\|_2^2\frac{\rho_N^{2\g}(\tau)}{\left(1+\rho_N^{2\g}(\tau)\right)^2}\frac d{d\tau}(\rho_N^\g)(\tau)\,d\tau.
\ea\ee
Substituting the above results in \eqref{Mstm}, we get
\begin{align*}
& \dy\int\limits_{J_N(\a)} \frac{\|v^N(\tau)\|_2^2}{1+\rho_N^{2\g}(\tau)}\frac d{d\tau}(\rho_N^\g)(\tau)\,d\tau
-2\int\limits_{J_N(\a)}\frac{\|v^N(\tau)\|_2^2\,\rho_N^{2\g}(\tau)}{\left(1+\rho_N^{2\g}(\tau)\right)^2}\frac d{d\tau}(\rho_N^\g)(\tau)\,d\tau\\
&\dy=\frac{\tan\a}{1+\tan^2\a}\sum_h\left(\|v^N(t_h)\|_2^2-\|v^N(s_h)\|_2^2\right)
+2\int\limits_{J_N(\a)}\wt\rho_N(\tau)\frac{\rho_N^\g(\tau)}{1+\rho_N^{2\g}(\tau)}\,d\tau.
\end{align*}
Before proceeding, we rewrite the left-hand side of the above equality using the algebraic identity
$$\frac1{1+\rho_N^{2\g}(\tau)}-\frac{2\rho_N^{2\g}}{\left(1+\rho_N^{2\g}(\tau)\right)^2}=-\frac1{1+\rho_N^{2\g}(\tau)}+\frac2{\left(1+\rho_N^{2\g}(\tau)\right)^2},$$
obtaining 
\begin{align}
&\dy\int\limits_{J_N(\a)} \frac{\|v^N(\tau)\|_2^2}{1+\rho_N^{2\g}(\tau)}\frac d{d\tau}(\rho_N^\g)(\tau)\,d\tau =2\int\limits_{J_N(\a)}\frac{\|v^N(\tau)\|_2^2}{\left(1+\rho_N^{2\g}(\tau)\right)^2}\frac d{d\tau}\left(\rho_N^\g(\tau)\right)\,d\tau \nonumber\\
&\dy-\frac{\tan\a}{1+\tan^2\a}\sum_h\left(\|v^N(t_h)\|_2^2-\|v^N(s_h)\|_2^2\right)
-2\int\limits_{J_N(\a)}\wt\rho_N(\tau)\frac{\rho_N^\g(\tau)}{1+\rho_N^{2\g}(\tau)}\,d\tau. \label{algebraic}
\end{align}
We are going to pass to the limit as $N$ goes to infinity.\hfill\break
For the last integral, we recall that $\wt\rho_N(\tau)\le c\rho_N(\tau)^{\frac p2}$ and that $\frac p2+\g<2\g$, since $\g>1$, hence
$$0\le\wt\rho_N(\tau)\frac{\rho_N^\g(\tau)}{1+\rho_N^{2\g}(\tau)}\le c\frac{\rho_N^{\frac p2+\g}(\tau)}{1+\rho_N^{2\g}(\tau)}\le c.$$
Applying the Fatou's Lemma, we get
\begin{align*}
\dy0 &\le\limsup_{N\to \infty}\int\limits_{J_N(\a)}\wt\rho_N(\tau)\frac{\rho_N^\g(\tau)}{1+\rho_N^{2\g}(\tau)}\,d\tau\\
\dy & \le\int\limits_s^t\limsup_{N\to \infty}\left(\chi_{J_N(\a)}\wt\rho_N(\tau)\frac{\rho_N^\g(\tau)}{1+\rho_N^{2\g}(\tau)}\right)\,d\tau.
\end{align*}
Now, we set
$$J(\a)=\limsup_{N\to \infty} J_N(\a)=\bigcap_{j=0}^\infty\bigcup_{N=j}^\infty J_N(\a),$$
and we remark that 
$$\tau\in J(\a) \iff \exists\, N_k\to\infty:  \tau\in J_{N_k}(\a),\, \text{for all }\,k \in \N.$$
Therefore $\chi_{J_{N_k}(\a)}(\tau)=1$ for any $k$, which ensures that $\limsup\limits_N\chi_{J_N(\a)}(\tau)=\chi_{J(\a)}(\tau)$.
Moreover
$$\tau\in J(\a)\cap{\mathcal T}\Rightarrow\rho_{N_k}^\g(\tau)>\tan\a\Rightarrow\rho^\g(\tau)\ge\tan\a.$$
It follows that
\begin{align}
\dy0 & \le\limsup_{N\to \infty}\int\limits_{J_N(\a)}\wt\rho_N(\tau)\frac{\rho_N^\g(\tau)}{1+\rho_N^{2\g}(\tau)}\,d\tau
=\limsup_{N\to \infty}\int\limits_s^t\chi_{J_N(\a)}\wt\rho_N(\tau)\frac{\rho_N^\g(\tau)}{1+\rho_N^{2\g}(\tau)}\,d\tau \nonumber \\
&\dy\le\int\limits_s^t\limsup_{N\to \infty}\chi_{J_N(\a)}\wt\rho_N(\tau)\frac{\rho_N^\g(\tau)}{1+\rho_N^{2\g}(\tau)}\,d\tau
=\int\limits_{J(\a)}\wt\rho(\tau)\frac{\rho^\g(\tau)}{1+\rho^{2\g}(\tau)}\,d\tau \nonumber\\
&\dy\le\frac1{\tan\a}\int\limits_{J(\a)}\wt\rho(\tau)\frac{\rho^{2\g}(\tau)}{1+\rho^{2\g}(\tau)}\,d\tau\le\frac1{\tan\a}\int\limits_{J(\a)}\wt\rho(\tau)\,d\tau.\label{limsuptilderhom}
\end{align}
Going back to \eqref{algebraic}, we estimate the first integral on the right-hand side recalling the energy equality \eqref{EEm} and the definition \eqref{Jmalpha} of $J_N(\a)$ 
\begin{align*}
\dy\left|\,\int\limits_{J_N(\a)}\frac{\|v^N(\tau)\|_2^2\frac d{d\tau}(\rho_N^\g)(\tau)}{\left(1+\rho_N^{2\g}(\tau)\right)^2}\,d\tau\right|
&\le\sup_{\tau,N}\|v^N(\tau)\|_2^2\left|\,\int\limits_{J_N(\a)}\frac{\frac d{d\tau}(\rho_N^\g)(\tau)}{\left(1+\rho_N^{2\g}(\tau)\right)^2}\,d\tau\right|\\
\dy &\le c\|v_0\|_2^2\left|\,\int\limits_{J_N(\a)}\frac{\g\rho_N^{\g-1}(\tau)\frac{d}{d\tau} \rho_N(\tau)}{\left(1+\rho_N^{2\g}(\tau)\right)^2}\,d\tau\right|\\
\dy & \le\frac{c\g\|v_0\|_2^2}{1+\tan^2\a}\int\limits_{J_N(\a)}\frac{(1+\rho_N(\tau))^{\g-1}\left|\frac{d} {d\tau} \rho_N(\tau)\right|}{1+\rho_N^{2\g}(\tau)}\,d\tau\\
\dy & \le\frac{c\g\|v_0\|_2^2}{1+\tan^2\a}\int\limits_{J_N(\a)}\frac{\left|\frac d{d\tau}\rho_N(\tau)\right|}{(1+\rho_N(\tau))^{\g+1}}\,d\tau
\le\frac{c\g\|v_0\|_2^2M}{1+\tan^2\a},
\end{align*}
where the last estimate follows from Lemma \ref{zetadef} recalling that $\zeta=\g+1$. Hence
\be\label{limsupfrac1+rhom^2}
\limsup_{N\to \infty}\left|\,\int\limits_{J_N(\a)}\frac{\|v^N(\tau)\|_2^2\frac d{d\tau}(\rho_N^\g)(\tau)}{\left(1+\rho_N^{2\g}(\tau)\right)^2}\,d\tau\right|
\le\frac{c\g\|v_0\|_2^2M}{1+\tan^2\a},\qquad\forall\,\a>\ov\a.
\ee
Keeping in mind that our target is \eqref{firstlimitm}, we multiply \eqref{algebraic} by $\frac{2}{\pi-2\a}$ and we pass to the limit as $N\to\infty$. We remark that the only term for which the existence of the limit is guaranteed is the first integral on the left-hand side (due to \eqref{firstlimitm}), hence we will rather consider the $\limsup$  
\begin{align}
& \frac{2}{\pi-2\a} \lim_{N\to \infty}  \int\limits_{J_N(\a)}
\frac{\|v^N(\tau)\|_2^2}{1+\rho_N^{2\g}(\tau)}
\frac{d}{d\tau}(\rho_N^\g)(\tau)\,d\tau \nonumber\\
& + \frac{2\tan\a}{(1+\tan^2\a)(\pi-2\a)}
\limsup_{N\to \infty} \sum_h \left(\|v^N(t_h)\|_2^2-\|v^N(s_h)\|_2^2\right)= \nonumber \\
\dy &= \frac4{\pi-2\a} \limsup_{N\to \infty} \left( \int\limits_{J_N(\a)}\frac{\|v^N(\tau)\|_2^2}{\left(1+\rho_N^{2\g}(\tau)\right)^2}\frac d{d\tau}\left(\rho_N^\g(\tau)\right)\,d\tau\right.
\left. -\int\limits_{J_N(\a)}\wt\rho_N(\tau)\frac{\rho_N^\g(\tau)}{1+\rho_N^{2\g}(\tau)}\,d\tau\right) \nonumber \\
\label{limitonalpha}
& =:\frac4{\pi-2\a}\limsup_{N\to \infty}\,(C_N(\a)-D_N(\a)).
\end{align}
Concerning the right-hand side, by \eqref{limsupfrac1+rhom^2} and \eqref{limsuptilderhom}  we have
\begin{align}
&\frac4{\pi- 2\a} \left|\limsup_{N\to \infty}(C_N(\a)- D_N(\a))\right|  \nonumber \\
\dy & \le\frac4{\pi-2\a}\,\left(\limsup_{N\to \infty}|C_N(\a)|+\limsup_{N\to \infty}|D_N(\a)|\right) \nonumber \\
\dy & \le\frac4{\pi-2\a}\left(\frac{c\g\|v_0\|_2^2M}{1+\tan^2\a}+\frac1{\tan\a}\int\limits_{J(\a)}\wt\rho(\tau)\,d\tau\right) \!\!. \label{lismupCmDm}
\end{align}
We observe that 
\be\label{limittan}\lim_{\a\to\frac\pi2^-}\frac1{(\pi-2\a)(1+\tan^2\a)}=0,\qquad\lim_{\a\to\frac\pi2^-}\frac1{(\pi-2\a)\tan\a}=\frac12 , \ee
hence we need an estimate of the measure of $J(\a)$.
We recall that, if $\tau\in J(\a)$ then $\left(\frac{\rho(\tau)}{(\tan\a)^{\frac1\g}}\right)^{\frac\beta2}\ge1$, where $\beta$ is the exponent in Proposition \ref{SCG}, hence
\begin{align}
\dy|J(\a)| & \le\frac1{(\tan\a)^{\frac\beta{2\g}}}\int\limits_{J(\a)}\rho^{\frac\beta2}(\tau)\,d\tau  \nonumber \\
&\dy\le\frac1{(\tan\a)^{\frac\beta{2\g}}}\int\limits_s^t\left\|\nabla v(\tau)\right\|_2^\beta\,d\tau\le\frac{C(\|v_0\|_2)}{(\tan\a)^{\frac \beta{2\g}}}, \label{measureJalpha}
\end{align}
thanks to \eqref{PC}. Hence $\lim\limits_{\a\to\frac\pi2^-}|J(\a)|=0$, and, by absolute continuity of the Lebesgue integral, 
\be\label{limitintrhotilde}\lim_{\a\to\frac\pi2^-}\int\limits_{J(\a)}\wt\rho(\tau)\,d\tau=0.\ee
By \eqref{lismupCmDm}, \eqref{limittan}, and \eqref{limitintrhotilde} we get
\be\label{limCNDN}\lim_{\a\to\frac\pi2^-}\frac4{\pi-2\a}\limsup_{N\to \infty}(C_N(\a)-D_N(\a))=0.\ee
Passing to the limit on $\a$ in equation \eqref{limitonalpha} and using \eqref{limCNDN}, we have
\begin{align}
\dy\lim_{\a\to\frac\pi2^-} & \left(  \frac{2}{\pi-2\a}\lim_{N\to \infty}\int\limits_{J_N(\a)}\frac{\|v^N(\tau)\|_2^2}{1+\rho_N^{2\g}(\tau)}\frac d{d\tau}(\rho_N^\g)(\tau)\,d\tau\right. \nonumber \\
& \dy\left.+\frac{2\tan\a}{(1+\tan^2\a)(\pi-2\a)}\limsup_{N\to \infty}\sum_h\left(\|v^N(t_h)\|_2^2-\|v^N(s_h)\|_2^2\right)\right)=0.\label{limlimsup}
\end{align}
Using \eqref{convA}, it follows that:
\be\label{limitalphaN}\lim_{\a\to\frac\pi2^-}\frac{2}{\pi-2\a}\lim_{N\to \infty}\int\limits_{J_N(\a)}\frac{\|v^N(\tau)\|_2^2}{1+\rho_N^{2\g}(\tau)}\frac d{d\tau}(\rho_N^\g)(\tau)\,d\tau
=\|v(s)\|_2^2-\|v(t)\|_2^2-2\int\limits_s^t\wt\rho(\tau)\,d\tau.\ee
Hence the limit as $\a\to\frac\pi2^-$ of the first term in \eqref{limlimsup} exists and it is finite. Observing that 
$$\lim_{\a\to\frac\pi2^-}\frac{2\tan\a}{(1+\tan^2\a)(\pi-2\a)}=1,$$
we get, from \eqref{limlimsup} and \eqref{limitalphaN}, that
$$\lim_{\a\to\frac\pi2^-}\limsup_{N\to \infty}\sum_h\left(\|v^N(t_h)\|_2^2-\|v^N(s_h)\|_2^2\right)=\|v(t)\|_2^2-\|v(s)\|_2^2+2\int\limits_s^t\wt\rho(\tau)\,d\tau.$$
To get the second expression of the energy gap we only need to remark that, by \eqref{EEm}, we have
$$\sum_h\left(\|v^N(t_h)\|_2^2-\|v^N(s_h)\|_2^2\right)=-2\sum_h\int\limits_{s_h}^{t_h}\wt\rho_N(\tau)\,d\tau=-2\int\limits_{J_N(\a)}\wt\rho_N(\tau)\,d\tau.$$
Finally, the claim on the measure of $J_N(\a)$ follows by the estimate \eqref{measureJalpha} with $J_N(\a)$ in place of $J(\a)$ and $\nabla v^N$ in place of $\nabla v$.
\end{proof}
\vskip0.1cm\noindent
 {\bf Acknowledgment} - The research activity of F. Crispo and A.P. Di Feola is performed under the auspices of GNFM-INdAM. \\
 The research activity of F. Crispo is partially supported by Universit\`{a} degli Studi della Campania ``Luigi Vanvitelli'', D.R. 111/2024, within VISCOMATH project.\\
The research activity of  C.R. Grisanti is performed under the auspices of GNAMPA-INdAM. The author acknowledges the MIUR Excellence Department Project awarded to the Department of Mathematics, University of Pisa, CUP I57G22000700001.
\vskip0.1cm\noindent
 {\bf Declarations}
\vskip0.1cm\noindent
 {\bf Funding} -  No funds, grants, or other support was received.
\vskip0.1cm\noindent
 {\bf Conflict of interest} - The authors have no conflicts of interest to declare that are relevant to the content of this article.

{\small
}

\end{document}